\documentclass{article}
\usepackage{graphicx} 
\usepackage[usenames,dvipsnames]{xcolor}
\thispagestyle{empty}
\usepackage{tkz-berge}
\usetikzlibrary{fit,shapes}
\usetikzlibrary{positioning,shapes,fit,arrows}
\usepackage{amsmath,amssymb}
\usepackage{amsthm}
\usepackage{enumitem}
\usepackage{graphicx}
\usepackage{caption}
\usepackage{epstopdf}
\usepackage{setspace}
\usepackage{blindtext}
\usepackage{multicol}
\usepackage{tikz}
\usepackage[english]{babel}
\usepackage{float}
\usepackage{empheq}
\usepackage{chngcntr}
\usepackage{subfigure}
\usepackage{cite}
\counterwithout{equation}{section}
\usepackage[retainorgcmds]{IEEEtrantools}
\usepackage{mathptmx}
\usepackage[top=1in,bottom=1in,right=1in,left=1in]{geometry}
\usepackage{comment}
\newtheorem{theorem}{Theorem}[section]

\newtheorem{lemma}[theorem]{Lemma}
\newtheorem{observation}{Observation}
\newtheorem{claim}{Claim}
\newtheorem{definition}{Definition}[section]

\newcommand{\p}{\mathcal{P}}
\theoremstyle{plain}
\newtheorem{thm}{Theorem}

\newtheorem{conj}[thm]{Conjecture}


\usepackage{hyperref}
\usepackage{relsize} 
\hypersetup{colorlinks,citecolor=blue}
\hypersetup{colorlinks=true,linkcolor=red,filecolor=magenta,    urlcolor=blue}

\theoremstyle{definition}

\theoremstyle{remark}

\title{ Gallai's Path Decomposition of Levi Graphs}

\usepackage{authblk}
\title{Gallai's Path Decomposition of Levi Graph}
\author[1]{Akankshya Sahu }
\author[2]{Sajith Padinhatteeri }
\affil[1,2]{Department of Mathematics, Birla Institute of Technology and Science-Pilani,Hyderabad Campus, Hyderabad-500078, India.}

\date{\today}

\begin{document}

\maketitle

\begin{abstract}
    Gallai's path decomposition conjecture states that for a connected graph $G$ on $n$ vertices, there exists a path decomposition of size $\lceil \frac{n}{2} \rceil$. The Levi graph of order one, denoted by $L_{1}(m,k)$, is a bipartite graph with vertex partition $(A,B)$, where $A$ is the collection of all $(k-1)$-element subsets of $[m]$, and $B$ is the collection of all $k$-element subsets of $[m]$. In this graph, a $(k-1)$-element subset is adjacent to a $k$-element subset if and only if it is properly contained within the $k$-element subset. The path number of a graph $G$ is the minimum size of its path decomposition. Gallai's conjecture can be seen as a conjecture on the upper bound of the path number of a connected graph. In this work, we prove the conjecture for $L_{1}(m,k)$ for all $m \ge 2 $ and $2 \le k \le m$. Moreover, we determine the path number of $L_{1}(m,2)$ for all $m$. 
\end{abstract}

\textbf{Keyword:} Path decomposition, Gallai's conjecture, Levi graph, Bipartite graph, Path number.

\section{Introduction} \label{section 1}

Let $G=(V, E)$ be a finite, simple, undirected, connected graph, where $V=V(G)$ is the set of vertices and $E=E(G)$ is the set of edges of the graph $G$. For an integer $k > 0$, we denote the set $\{1,2,3 \dots, k\}$ by $[k]$. A path is an alternating sequence of distinct vertices and edges, written as $v_1 e_{12} v_2 e_{23} v_3 \dots v_t$ of $G$, such that $v_i$ is adjacent to $v_{i+1}$ through the edge $e_{i(i+1)}$ for all $1 \leq i \leq t-1$. For simplicity of notation, we use $v_1, v_2, v_3 \dots v_t$ or $P_i$ instead of the above notation for a path. 

A \textbf{path decomposition} of a graph $G$ is a collection of paths $\mathcal{P} = \{P_1, P_2, \dots, P_K\} $ such that $\cup_{i} E(P_i) = E(G)$ and $E(P_i)\cap E(P_j) = \phi$ for $i \neq j$. That is, for each edge $e \in E(G)$ there exists a unique path in $\mathcal{P}$ containing $e$. The number of elements in $\mathcal{P}$ of a graph $G$ is called the \textbf{size of the path decomposition} and the minimum size among all possible path decompositions of $G$ is called the \textbf{path number}. It is denoted by $|\p|$ and $pn(G)$, respectively. In general, a decomposition $\mathcal{D}$ of a graph $G$ is a collection of subgraphs of $G$ such that each edge of $G$ belongs to exactly one subgraph in $\mathcal{D}$. If such a decomposition exists, we say that $G$ is covered by the subgraphs in the collection. There is a long-lasting conjecture on path decomposition of graphs due to Gallai, which is stated below.

\begin{conj}\label{conj}(Gallai, 1968) \cite{lovas}
If $G$ is a connected graph on $n$ vertices, then $G$ has a path decomposition of size at most $\lceil \frac{n}{2} \rceil $. 
\end{conj}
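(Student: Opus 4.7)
The plan is to attempt a proof by strong induction on $|E(G)|$, built on top of Lov\'asz's classical theorem that every graph on $n$ vertices decomposes into at most $\lfloor n/2 \rfloor$ paths and cycles combined. The central task is then to convert such a mixed decomposition into a pure path decomposition of size at most $\lceil n/2 \rceil$ by absorbing each cycle into a suitably chosen adjacent path without exceeding the budget.

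First I would split the argument according to the number $2t$ of odd-degree vertices of $G$. Since every odd-degree vertex is an endpoint of an odd number of paths in any path decomposition, at least $t$ paths are required; in the non-Eulerian case $t \geq 1$, the strategy is to pair up odd-degree vertices optimally and route $t$ openly-ended trails through $G$ that reduce the residual graph to an Eulerian subgraph, on which the induction hypothesis (or a direct construction) can be applied. In the Eulerian case $t = 0$, I would instead peel off a single closed trail $T$ chosen so that $G - E(T)$ has components whose combined decomposition bounds fit inside $\lceil n/2 \rceil - 1$, and then splice $T$ back in as a path by cutting it at a vertex shared with another decomposition element.

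The hard part, which I expect to be the main obstacle, is precisely the Eulerian case. Here the parity lower bound disappears, so nothing prevents stray cycles from persisting in a Lov\'asz-type decomposition, yet the slack between $\lfloor n/2 \rfloor$ (paths plus cycles) and $\lceil n/2 \rceil$ (paths only) is at most one. Hence only a single cycle can be absorbed ``for free,'' and every subsequent absorption forces a delicate global rearrangement of edges among cycles and paths. A natural attempt is to choose a mixed decomposition extremal with respect to a secondary parameter -- for instance, minimizing the total length of cycles, or the number of cycles -- and then perform local swaps at cycle-path intersections. However, proving that such swaps always succeed is essentially the content of the conjecture itself, and this circularity is the reason the statement has resisted proof since 1968 despite verification for various restricted classes (graphs of maximum degree at most five, certain planar and series-parallel families, graphs with few even-degree vertices, etc.).

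In view of this obstruction, a realistic near-term strategy is to replace ``for all connected $G$'' with a structural hypothesis that rules out the worst Eulerian configurations. Concretely, I would aim to prove the bound for graphs admitting a spanning subgraph with controllable parity -- for example, a spanning tree whose non-tree edges can be partitioned into short ears -- and then extend by an ear-decomposition argument. Even this more modest plan still pivots on the cycle-absorption step, and a genuinely new idea (a new extremal invariant tailored to the Eulerian regime, or a global reassembly argument based on the cycle space) appears to be required for the full conjecture.
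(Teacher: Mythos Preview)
The statement you are attempting to prove is Gallai's \emph{conjecture}, not a theorem; the paper does not prove it and indeed cannot, since it remains open in full generality. The paper merely states the conjecture and then establishes it for the specific family of Levi graphs $L_1(m,k)$ (Theorems~\ref{m1} and~\ref{pm1}), using an inductive decomposition into $ULG$, $LLG$, and crossing edges. There is therefore no ``paper's own proof'' of this statement to compare against.

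Your proposal is not a proof but a candid diagnosis of why the standard Lov\'asz-based approach stalls: you correctly isolate the Eulerian case as the obstruction, observe that the slack between $\lfloor n/2\rfloor$ and $\lceil n/2\rceil$ allows at most one free cycle absorption, and then acknowledge explicitly that the cycle-absorption step ``is essentially the content of the conjecture itself'' and that ``a genuinely new idea \ldots\ appears to be required.'' That is an accurate assessment, but it means the proposal contains no argument that closes the gap. The fallback plan you sketch --- restricting to graphs with a spanning subgraph of controllable parity and proceeding by ear decomposition --- is vague and, as you note, still hinges on the same unresolved absorption step. In short, the genuine gap is that you have identified the crux without supplying any mechanism to resolve it; what you have written is a well-informed survey of the difficulty, not a proof attempt that could be completed.
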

The bound mentioned in the conjecture is attained when all the vertices in a graph have odd degree since the \textbf{odd degree condition} forces each of the vertices in the graph to be an end vertex of at least one path in any path decomposition of the graph.
  That is, we need at least $\lceil \frac{n}{2} \rceil $  paths to have a path decomposition of the graph $G$. For odd graphs (graphs with each vertex having odd degree), the existence of such a path decomposition of size $\frac{n}{2}$ is discussed in \cite{lovas}. Moreover, in the same paper, it is shown that for any graph $G$ of order $n$, it is possible to have a decomposition consisting of paths and cycles satisfying the bound mentioned in the conjecture. Precisely:
 
\begin{thm} \label{LT1}
A graph of $n$ vertices can be covered by at most $\lfloor \frac{n}{2} \rfloor$ disjoint paths and circuits.
\end{thm}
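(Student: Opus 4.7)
My plan is to establish Theorem~\ref{LT1} by induction on $n$. The base cases $n\le 2$ are immediate: an isolated vertex needs no parts and a single edge is itself a path. Since disjoint components contribute parts additively and $\sum_i \lfloor n_i/2\rfloor \le \lfloor n/2\rfloor$ whenever $\sum_i n_i = n$, I may further reduce to the case where $G$ is connected.

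For connected $G$, I would take a decomposition $\mathcal{D}$ of $G$ into edge-disjoint paths and cycles with the \emph{minimum} possible number of parts, and suppose toward contradiction that $|\mathcal{D}|>\lfloor n/2\rfloor$. Since cycles in $\mathcal{D}$ contribute no endpoints and each path contributes exactly two, the total count of path-endpoint incidences is $2\cdot(\text{number of paths in }\mathcal{D})$. A pigeonhole count against the $n$ vertices of $G$ would then force the existence of a vertex $v$ that serves as an endpoint of at least two distinct paths $P_1,P_2\in\mathcal{D}$. The natural reduction step is to concatenate $P_1$ and $P_2$ at $v$ into a single longer path (or a cycle, when the other two endpoints coincide), producing a decomposition of $G$ with one fewer part and contradicting the minimality of $|\mathcal{D}|$.

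The main obstacle will be that this local surgery can fail: if $P_1$ and $P_2$ share some internal vertex besides $v$, their union is not a path but a more complex subgraph, and the naive merge is invalid. To overcome this, I would refine the choice of $\mathcal{D}$ with a secondary extremal condition — for instance, among all minimum decompositions, pick one maximizing the total length of the longest path, or lexicographically maximizing the sorted vector of path lengths. I would then argue that under such an optimized choice, at least one of the pairs of paths meeting at $v$ must be internally disjoint, so that concatenation does yield a valid path and the contradiction goes through.

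As an alternative route in reserve, I would consider the Eulerian-augmentation trick: adjoin an auxiliary vertex $v^\star$ joined to every odd-degree vertex of $G$, take an Eulerian circuit in the augmented graph (which has all even degrees), and cut the circuit at each visit to $v^\star$. This yields $k$ edge-disjoint trails in $G$, where $2k$ is the number of odd-degree vertices, and each trail may be converted into a path by peeling off subcycles whenever a vertex is revisited. The delicate point in either approach is the precise accounting: in the extremal argument, verifying that the secondary optimization really prevents all bad overlaps; in the Eulerian approach, bounding the peeled-off cycles so that the grand total of paths plus cycles never exceeds $\lfloor n/2\rfloor$.
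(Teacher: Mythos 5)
First, note that the paper does not prove Theorem~\ref{LT1} at all: it is quoted from Lov\'asz \cite{lovas} and used as a black box, so there is no in-paper proof to compare yours against. Judged on its own terms, your proposal has a genuine gap at its central step. You take a decomposition $\mathcal{D}$ into paths and circuits with the minimum number of parts, assume $|\mathcal{D}|>\lfloor \frac{n}{2}\rfloor$, and argue by pigeonhole that some vertex is an endpoint of two paths. But circuits contribute no endpoints: if $\mathcal{D}$ consists of $p$ paths and $c$ circuits, your hypothesis only gives $p+c>\lfloor \frac{n}{2}\rfloor$, while the pigeonhole needs $2p>n$. Whenever $c\ge 1$ it is perfectly possible that $p\le \lfloor \frac{n}{2}\rfloor$, so no vertex need be a doubly-used endpoint, and the argument collapses precisely in the presence of the circuits the theorem is designed to allow.

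Second, even in the all-paths situation the merging step is not a technicality you can defer. In a minimum decomposition, any two paths that share an endpoint and are otherwise disjoint could already be merged into a single path or circuit; minimality therefore forces every such pair to overlap internally. Your proposed secondary optimization (maximizing the longest path, or the sorted length vector) must defeat a structural obstruction, not a bookkeeping one, and you give no argument that it does --- the failure of exactly this kind of local surgery is why Gallai's conjecture itself remains open. The Eulerian-augmentation fallback has the same unproved accounting: peeling subcycles off the trails can create many circuits, you never bound the resulting total of paths plus circuits by $\lfloor \frac{n}{2}\rfloor$, and the case with no odd-degree vertices produces no trails at all. Lov\'asz's actual proof is a delicate induction organized around the odd-degree vertices and is substantially harder than either of your routes; as written, your proposal does not constitute a proof.
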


 The conjecture \ref{conj} has been verified for specific families of graphs, such as those with restrictions on degree conditions \cite{lovas, LP, GF, MB, 2kreg, maxdegree6}, structural conditions \cite{botlertree, planar, girth4, seriesparallel}, etc. However, the conjecture remains open for many well-known families of graphs.

 In \cite{girth4}, the authors have shown that graphs with girth $g$ at least four have a path decomposition of size at most $\frac{o}{2}+ \lfloor (\frac{g+1}{2g}) t \rfloor$, where $t,o$ represents the number of even degree and odd degree vertices in the graph, respectively. Observe that the bound is approximately $\frac{n}{2} + \frac{t}{2g}$, which is a good approximation when $g$ is sufficiently large or $t$ is efficiently small. Validity of the Gallai's conjecture for complete bipartite graphs has been discussed in \cite{CKC}. They have shown that the path number (the minimum cardinality of a path decomposition of a graph) of the complete bipartite graph $K_{n_1,n_2}$ of order $n$  is $\frac{n}{2}$ if $n_1=n_2$ and when $n_1$ is even and $1\leq n_2 \leq n_{1}-1$ the path number is $\frac{n_1}{2}$.

The path decomposition problem on regular bipartite graphs is addressed in \cite{6RB}. They have shown that every $6$-regular bipartite graph on $n$ vertices can be decomposed into $\frac{n}{2}$ paths. 
 
 In this work, we consider another family of bipartite graphs, which is bi-regular and has girth six called Levi graphs and show that these family of bipartite graphs also satisfy the upper bound suggested by Gallai. Moreover, the upper bound of the path number that we establish in this work is strictly less than the bound in Conjecture\ref{conj} in certain cases.

Levi graph of order one, $L_{1}(m,k)$, is a bipartite graph with partition $(A,B)$, where $A$ is the collection of all subsets of $[m]=\{1,2,\dots, m\}$ of size $k-1$, and $B$ is the collection of all subsets of $[m]$ of size $k$. In this graph, a $(k-1)$-element subset (say, vertex $u$ in $A$) is adjacent to a $k$-element subset (say, vertex $v$ in $B$) if and only if $u$ is properly contained in the $k$-element subset $v$. By definition, the total number of vertices in the graph is $\binom{m}{k-1} + \binom{m}{k}$. In this discussion, we denote the total number of vertices of $L_1(m,k)$ by $n$. That is, $\binom{m}{k-1} + \binom{m}{k}=n$. Moreover, observe that the degree of each vertex in $A$ is $m-k+1$, and the degree of each vertex in $B$ is $k$. Figure \ref{fig:LG43} represents the Levi graph $L_{1}(4,3)$, where $A$ contains all the $2$-element subsets of $\{1,2,3,4\}$, and $B$ contains all the $3$-element subsets of $\{1,2,3,4\}$. The number of vertices in $A$ and $B$ is six and four, respectively. The degree of each vertex in $A$ is two, whereas the degree of each vertex in $B$ is three.\\

\definecolor{myblue}{RGB}{80,80,160}
\definecolor{mygreen}{RGB}{80,160,80}

\begin{figure}[H]
  \centering

\resizebox{3cm}{4cm}{
\begin{tikzpicture}[baseline=(current bounding box.south)]
\node[draw,black, circle] (a1) {12};
\node[draw,black, circle,below=0.1cm of a1] (a2) {13};
\node[draw,black, circle,below=0.1cm of a2] (a3) {23};
\node[draw,black, circle,below=0.1cm of a3] (a4) {14};
\node[draw,black, circle,below=0.1cm of a4] (a5) {24};
\node[draw,black, circle,below=0.1cm of a5] (a6) {34};

\node[draw,black, circle,right=4cm of a1] (b1) {123};
\node[draw,black, circle,below=0.5cm of b1] (b2) {124};
\node[draw,black, circle,below=0.5cm of b2] (b3) {134};
\node[draw,black, circle,right=4cm of a6] (b4) {234};

\node[shape=rectangle,draw=black,line width=1pt,minimum size=1.5cm,fit={(a1) (a6)}] {};
\node[shape=rectangle,draw=black,line width=1pt,minimum size=1.5cm,fit={(b1) (b4)}] {};

\node[above=0.5cm of a1,font=\color{black}\Large\bfseries] {$A$};
\node[above=0.5cm of b1,font=\color{black}\Large\bfseries] {$B$};

\draw[-,black] (a1) -- (b1);
\draw[-,black] (a1) -- (b2);
\draw[-,black] (a2) -- (b1);
\draw[-,black] (a2) -- (b3);
\draw[-,black] (a3) -- (b1);
\draw[-,black] (a3) -- (b4);
\draw[-,black] (a4) -- (b2);
\draw[-,black] (a4) -- (b3);
\draw[-,black] (a5) -- (b2);
\draw[-,black] (a5) -- (b4);
\draw[-,black] (a6) -- (b3);
\draw[-,black] (a6) -- (b4);

\end{tikzpicture}}
 \caption{Levi graph of order one $L_1(4,3)$}
      \label{fig:LG43}
 \end{figure}

This paper is organized as follows. Section \ref{section 1} introduces the problem, providing context and motivation for the study, as well as an overview of relevant background information on path decomposition, Gallai's path decomposition conjecture, and the Levi graph. Section \ref{section 2} presents the key lemmas used to prove the main results in this work. In Section \ref{section 3}, we prove the conjecture for the Levi graph of order one. Finally, Section \ref{section 4} discusses the path number of $L_1(m,2)$. The Gallai's conjecture guarantees a path decomposition of size $\lceil \frac{n}{2} \rceil$ for any graph of order $n$, but our upper bound of the size of the path decomposition of $L_1(m,k)$ is slightly lesser than the Gallai's bound. We show that $\lfloor \frac{n}{2} \rfloor$ paths are enough to have a path decomposition of the Levi graphs having $n$ vertices. Some authors call such graphs, a graph of order $n$ with a path decomposition of size $\lfloor \frac{n}{2} \rfloor$, as Gallai graphs\cite{trifpg}. In short, we show that the Levi graphs of order one are Gallai graphs. Additionally, we prove that the path number of $L_1(m,2)$ is $\lceil \frac{m}{2} \rceil$, which is much smaller than the proposed upper bound. The following are the main theorems in this work, and the proofs are given in sections \ref{section 3} and \ref{section 4}, respectively.\\
\textbf{Main Theorems:}
\begin{theorem} \label{m1}
    For $m \geq 2$ and $2 \leq k \leq m$, the path number of $L_1(m,k)$ having $n$ vertices is less than or equal to $\lfloor \frac{n}{2} \rfloor$. 
\end{theorem}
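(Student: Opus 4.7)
My plan is to prove Theorem~\ref{m1} by induction on $m$, exploiting a natural recursive decomposition of $L_1(m,k)$ arising from distinguishing subsets by whether they contain the element $m$. Split $A = A_0 \sqcup A_1$ and $B = B_0 \sqcup B_1$ according as the underlying subset does not, or does, contain $m$. A direct verification shows that the subgraph induced on $A_0 \cup B_0$ is a copy of $L_1(m-1,k)$; the subgraph induced on $A_1 \cup B_1$ is, via $S \mapsto S \setminus \{m\}$, a copy of $L_1(m-1,k-1)$; there are no edges between $A_1$ and $B_0$; and the edges between $A_0$ and $B_1$ form a perfect matching $M$ of size $\binom{m-1}{k-1}$, pairing each $u \in A_0$ with $u \cup \{m\} \in B_1$. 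The vertex count splits additively as $n = n_0 + n_1$ with $n_0 = |V(L_1(m-1,k))|$ and $n_1 = |V(L_1(m-1,k-1))|$.

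In the inductive step I take decompositions $\mathcal{P}_0$ of $L_1(m-1,k)$ and $\mathcal{P}_1$ of $L_1(m-1,k-1)$ of sizes at most $\lfloor n_0/2 \rfloor$ and $\lfloor n_1/2 \rfloor$ respectively, and merge them with $M$. If each edge of $M$ is kept as a one-edge path the raw count is $|\mathcal{P}_0| + |\mathcal{P}_1| + |M|$, which exceeds the target $\lfloor n/2 \rfloor$ by roughly $|M| = \binom{m-1}{k-1}$. The savings must come from absorbing each matching edge $uv$ (with $u \in A_0$ and $v \in B_1$) into a longer path: concatenating with a path of $\mathcal{P}_0$ that ends at $u$ saves one unit, and if we can further merge with a path of $\mathcal{P}_1$ ending at $v$ we save one more. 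Since the average savings needed per matching edge is exactly one, it suffices that essentially every matching edge extend a path on at least one side.

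Base cases are $m = 2$, $k = 1$, and $k = m$; the last two both give the star $K_{1,m}$, whose path number $\lceil m/2 \rceil$ is at most $\lfloor n/2 \rfloor = \lfloor (m+1)/2 \rfloor$. For $k = m$ the recursion cannot proceed (since $L_1(m-1,m)$ is not defined), so this case must be treated directly; for $2 \leq k \leq m-1$ both sub-Levi-graphs exist and the induction applies.

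The main obstacle is ensuring that the inductive decompositions $\mathcal{P}_0$ and $\mathcal{P}_1$ have the endpoint distributions needed for the absorption step. This forces a strengthening of the inductive hypothesis, asserting not only the size bound but also that designated vertices (the ones incident to $M$, namely $A_0$ for $\mathcal{P}_0$ and $B_1$ for $\mathcal{P}_1$) occur as endpoints of paths often enough. I expect the lemmas of Section~\ref{section 2} to supply the endpoint-swap or path-exchange operations needed to both establish and preserve this stronger hypothesis; the remaining bookkeeping is a case analysis on the parities of $n_0$ and $n_1$, where the slack $\lfloor n_0/2 \rfloor + \lfloor n_1/2 \rfloor \leq \lfloor n/2 \rfloor$ (with strict inequality when both $n_0$ and $n_1$ are odd) absorbs the floor/ceiling mismatches.
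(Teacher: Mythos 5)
You have reconstructed the paper's decomposition exactly: $ULG\cong L_1(m-1,k)$ on $A_0\cup B_0$, $LLG\cong L_1(m-1,k-1)$ on $A_1\cup B_1$, no edges between $A_1$ and $B_0$, and the crossing edges forming a perfect matching $M$ between $A_0$ and $B_1$. Your counting is also right: Lemma \ref{pascal} gives $\lfloor n_0/2\rfloor+\lfloor n_1/2\rfloor\le\lfloor n/2\rfloor$, so it suffices to absorb each matching edge into an existing path at one of its two ends. But the step you defer---guaranteeing that the relevant matching endpoints actually occur as path ends in $\mathcal{P}_0$ or $\mathcal{P}_1$---is the heart of the proof, and the route you sketch for it (a strengthened induction hypothesis about endpoint distributions, maintained by unspecified ``endpoint-swap or path-exchange operations'') is neither carried out nor what the paper does; no lemma in Section \ref{section 2} supplies such operations. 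The paper closes this gap with a parity observation that makes any strengthening unnecessary: a vertex of odd degree must be an end vertex of some path in \emph{every} path decomposition. A vertex of $A_0$ has degree $m-k$ inside $ULG$, and a vertex of $B_1$ has degree $k-1$ inside $LLG$. If $k$ is even, then $k-1$ is odd, so every $B_1$-vertex is forced to be a path end in $\mathcal{P}_1$ and each matching edge is absorbed on the $B_1$ side; if $k$ is odd and $m$ is even, then $m-k$ is odd and one absorbs on the $A_0$ side; in the remaining case ($k$ and $m$ both odd) every vertex of $L_1(m,k)$ has odd degree $m-k+1$ or $k$, and Lov\'asz's result for odd graphs gives the bound outright with no induction. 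Because $M$ is a matching, the attachment vertices are pairwise distinct, and the added endpoint lies outside the subgraph being extended, so no collisions or cycles arise and the path count is unchanged. Without this (or a worked-out substitute) your argument does not close.

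A secondary issue is the bottom of the recursion: the $LLG$ branch sends $k$ to $k-1$, so the induction cannot pass through $k=2$, where $LLG$ of $L_1(m,2)$ is a star rather than a Levi graph covered by Lemma \ref{llg}. The paper therefore proves $k=2$ (Theorem \ref{t1}) and $k=m$ (Theorem \ref{star}) as standalone base results before running the induction for $3\le k\le m-1$. Your list of base cases ($m=2$, $k=1$, $k=m$) gestures at this, but $k=1$ is outside the theorem's range, and you would still owe a separate argument for the whole family $L_1(m,2)$, not just for a single small $m$.
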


\begin{theorem} \label{m2}
    For $m \geq 2$, the path number of $L_1(m,2)$ is $\lfloor \frac{m}{2} \rfloor$. 
\end{theorem}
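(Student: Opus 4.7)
The starting observation is that $L_1(m,2)$ is exactly the subdivision of the complete graph $K_m$: identify each $A$-vertex $\{i\}$ with vertex $i$ of $K_m$ and each $B$-vertex $\{i,j\}$ with a degree-two midpoint on the edge $ij$, so that every edge of $K_m$ is replaced by the length-two path $\{i\}$–$\{i,j\}$–$\{j\}$ in $L_1(m,2)$. Under this identification, a vertex-simple path in $L_1(m,2)$ corresponds to a trail in $K_m$ whose consecutive edges share pairwise-distinct vertices, and the path-decomposition problem reduces (with added flexibility because each $B$-vertex has degree two) to a trail-decomposition problem on $K_m$.

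For the lower bound I would split on the parity of $m$. If $m$ is even, then each of the $m$ vertices in $A$ has odd degree $m-1$, and the standard odd-degree bound immediately gives $pn(L_1(m,2)) \geq m/2 = \lfloor m/2 \rfloor$. If $m$ is odd, every vertex has even degree and the odd-degree bound is vacuous; I instead bound the length of a single simple path, using that $L_1(m,2)$ is bipartite with $|A| = m$ and each $A$-vertex appears at most once on a vertex-simple path contributing at most two edges. Any path therefore has at most $2m$ edges, and dividing the total edge count $m(m-1)$ by $2m$ yields $pn(L_1(m,2)) \geq (m-1)/2 = \lfloor m/2 \rfloor$.

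For the upper bound I would construct an explicit decomposition in each parity. When $m = 2t$ is even, I start from the classical decomposition of $K_m$ into $t$ edge-disjoint Hamilton paths (Lucas' rotational construction) and lift each Hamilton path $v_0 v_1 \cdots v_{m-1}$ to the $L_1(m,2)$-path $\{v_0\}$–$\{v_0,v_1\}$–$\{v_1\}$–$\cdots$–$\{v_{m-1}\}$ of length $2(m-1)$; the resulting $t$ paths are edge-disjoint and cover every edge of $L_1(m,2)$. When $m = 2t+1$ is odd, a naive lift of a Hamilton-path decomposition of $K_m$ would produce $\lceil m/2 \rceil = t+1$ paths, one too many, so the construction must exploit that each degree-two $B$-vertex can be shared as an endpoint by two distinct paths with one incident edge each. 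Concretely, I would select $t$ hub $B$-vertices and build $t$ $B$-to-$B$ paths whose endpoint pattern is a $2$-regular multigraph on the hubs (each hub becoming the endpoint of exactly two paths with its two incident $L_1(m,2)$-edges distributed between them), while every non-hub $B$-vertex is traversed as an interior vertex of exactly one path; such a path corresponds in $K_m$ to an Eulerian trail on a carefully chosen subgraph whose two odd-degree vertices are the $K_m$-endpoints attached to the hub edges of that path.

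The main technical obstacle is the odd case: the hubs and their associated subgraphs of $K_m$ must be chosen in a mutually compatible way, so that the overlap pattern on hubs globally realises the required $2$-regular endpoint graph and each non-hub edge of $K_m$ is covered exactly once. I would approach this by first writing out explicit templates for small odd $m$ (for $m=5$, two paths share the same pair of hub endpoints, giving a double-edge $2$-regular pattern on two hubs; for $m=7$, three paths form a triangle pattern on three hubs), and then extending to arbitrary odd $m$ either by a rotational construction built on Walecki's Hamilton-cycle decomposition of $K_m$ or by an inductive step that passes from a decomposition of $L_1(m,2)$ to one of $L_1(m+2,2)$ by incorporating the two new $A$-elements into the existing hub/trail framework.
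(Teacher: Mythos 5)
Your reduction to $K_m$, your lower bound, and your even-$m$ construction are all correct and essentially coincide with the paper's: the odd-degree count of the $m$ vertices in $A$ handles even $m$; for odd $m$ the observation that a vertex-simple path meets each of the $m$ vertices of $A$ at most once, hence has at most $2m$ edges, combined with $|E(L_1(m,2))|=m(m-1)$, gives $pn(L_1(m,2))\geq (m-1)/2$; and the even case is settled by lifting the $m/2$ edge-disjoint Hamiltonian paths of $K_m$ through the subdivision. All of this matches the paper's Claims on subdivision and on the lower bound.

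The genuine gap is the upper bound for odd $m$, which is exactly where the content of the theorem lies, and there your proposal stops at a plan rather than a construction. You correctly identify the mechanism --- $(m-1)/2$ paths must pair up at degree-two $B$-vertices, each such ``hub'' serving as an endpoint of two paths with one incident edge each --- but you do not exhibit a compatible system of hubs and subgraphs; you explicitly defer it to ``templates for small odd $m$'' plus an unproven rotational or inductive extension. The existence of such a system is the one step that needs an argument, and it is not automatic. The paper closes it concretely: take the standard decomposition of $K_m$ ($m$ odd) into $(m-1)/2$ Hamiltonian paths $P_i^*$ with endpoints $i$ and $i+1$, plus the leftover path $1,2,\dots,\frac{m+1}{2}$; subdivide everything; then absorb the subdivided leftover path two edges at a time by extending each $P_i^*$ past its endpoints $i$ and $i+1$ into adjacent midpoints such as $\{i,i+1\}$ and $\{i+1,i+2\}$, with a small case analysis on the parity of $i$ and of $\frac{m-1}{2}$ to make the assignment of the $m-1$ leftover edges injective. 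The facts that make this legal --- the endpoints of $P_i^*$ are precisely $i$ and $i+1$, and the midpoint $\{i,i+1\}$ does not already lie on $P_i^*$ because the edge $(i,i+1)$ was deleted from the $i$-th Hamiltonian cycle --- are exactly what your hub scheme would have to verify. Until you write down such an assignment and check that each extended path remains vertex-simple, the odd case is unproved.
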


\section{Preliminaries} \label{section 2}

In this section, we discuss a few lemmas and a partition of the Levi graph, which is used in the proof of the main theorem, Theorem\ref{m1}.\\

\subsection{Partition of the Levi graph $L_1(m,k)$} 

We partition the Levi graph $L_1(m,k)$ into three different parts, namely $LLG, ULG$ and ``crossing edges". $LLG$ is an induced subgraph of $L_1(m,k)$ containing all vertices having the integer $m$. For example, figures \ref{p1} and \ref{p2} show the Levi graph $L_(4,2)$ and $LLG$ of $L_1(4,2)$, respectively. One can observe that figure \ref{p2} is obtained from figure \ref{p1} by considering the induced subgraph of $L_1(4,2)$ containing all the vertices having integer $4$. The induced subgraph on the remaining vertices of $L_1(m,k)$ is defined as $ULG$; that is, the induced subgraph $ULG$ of $L_1(m,k)$ contains all the vertices of $L_1(m,k)$ where the integer $m$ is not present. The edges of $L_1(m,k)$ that connect $LLG$ and $ULG$ are defined as ``crossing edges". Precisely, the ``crossing edges" are exactly the edges in $E(L_1(m,k)) \setminus [E(LLG) \cup E(ULG)]$. Figures \ref{p1}, \ref{p2}, \ref{p3}, and \ref{p4} show the Levi graph $L_1(4,2)$ and its partitions: $LLG$, $ULG$, and ``crossing edges". One can observe that due to the presence and absence of the integer `$m$', the induced subgraphs $LLG$ and $ULG$ are disjoint.

\begin{figure}[H]
  \centering
\subfigure[Levi graph $L_1(4,2)$]{\resizebox{3cm}{4cm}{
\begin{tikzpicture}[baseline=(current bounding box.south)]
 \node[draw,black, circle] (a1) {1};
 \node[draw,black, circle,below=0.8cm of a1] (a2) {2};
 \node[draw,black, circle,below=0.8cm of a2] (a3) {3};
 \node[draw,black, circle,below=0.8cm of a3] (a4) {4};
 \node[below=0.7cm of a4] (a5) {};

 \node[draw,black, circle,right=4cm of a1] (b1) {12};
 \node[draw,black, circle,below=0.3cm of b1] (b2) {13};
 \node[draw,black, circle,below=0.3cm of b2] (b3) {23};
 \node[draw,black, circle,below=0.3cm of b3] (b4) {14};
 \node[draw,black, circle,below=0.3cm of b4] (b5) {24};
 \node[draw,black, circle,below=0.3cm of b5] (b6) {34};

 \node[shape=rectangle,draw=black,line width=1pt,minimum size=1.5cm,fit={(a1) (a5)}] {};
 \node[shape=rectangle,draw=black,line width=1pt,minimum size=1.5cm,fit={(b1) (b6)}] {};

 \node[above=0.5cm of a1,font=\color{black}\Large\bfseries] {$A$};
 \node[above=0.5cm of b1,font=\color{black}\Large\bfseries] {$B$};

 \draw[-,Black] (a1) -- (b1);
 \draw[-,Black] (a1) -- (b2);
 \draw[-,Black] (a1) -- (b4);
 \draw[-,Black] (a2) -- (b1);
 \draw[-,Black] (a2) -- (b3);
 \draw[-,Black] (a2) -- (b5);
 \draw[-,Black] (a3) -- (b2);
 \draw[-,Black] (a3) -- (b3);
 \draw[-,Black] (a3) -- (b6);
 \draw[-,Black] (a4) -- (b4);
 \draw[-,Black] (a4) -- (b5);
 \draw[-,Black] (a4) -- (b6);
 \end{tikzpicture}}
\label{p1}}\,\,\,\,\,\,\,\,\,\,\,\,\,\,\,\,\,\,\,\,\,\,\,\,\,\,\,\,\,\,\,\,\,\,\,\,\,\,\,\,\,\,\,\,\,\,\,\,\,\,\,\,\,\,\,\,\,\,\,\,\,\,\,\,\,\,\,\,\,\,\,\,\,\,\,\,\,\,\,\,
\subfigure[$LLG$ of $L_1(4,2)$]{\resizebox{3cm}{3cm}{
 \begin{tikzpicture}[baseline=(current bounding box.south)]
 \node (a1) {};
 \node[draw,black, circle,below=0.8cm of a1] (a4) {4};
 \node[below=0.7cm of a4] (a5) {};

 \node[draw,black, circle,right=4cm of a1] (b4) {14};
 \node[draw,black, circle,below=0.3cm of b4] (b5) {24};
 \node[draw,black, circle,below=0.3cm of b5] (b6) {34};

 \node[shape=rectangle,draw=black,line width=1pt,minimum size=1.5cm,fit={(a1) (a5)}] {};
 \node[shape=rectangle,draw=black,line width=1pt,minimum size=1.5cm,fit={(b1) (b6)}] {};

 \node[above=0.5cm of a1,font=\color{black}\Large\bfseries] {$A$};
 \node[above=0.5cm of b1,font=\color{black}\Large\bfseries] {$B$};

 \draw[-,Black] (a4) -- (b4);
 \draw[-,Black] (a4) -- (b5);
 \draw[-,Black] (a4) -- (b6);
 \end{tikzpicture}}
\label{p2}}

\subfigure[$ULG$ of $L_1(4,2)$]{\resizebox{3cm}{3cm}{
\begin{tikzpicture}[baseline=(current bounding box.south)]
 \node[draw,black, circle] (a1) {1};
 \node[draw,black, circle,below=0.4cm of a1] (a2) {2};
 \node[draw,black, circle,below=0.4cm of a2] (a3) {3};

 \node[draw,black, circle,right=4cm of a1] (b1) {12};
 \node[draw,black, circle,below=0.3cm of b1] (b2) {13};
 \node[draw,black, circle,below=0.3cm of b2] (b3) {23};

 \node[shape=rectangle,draw=black,line width=1pt,minimum size=1.5cm,fit={(a1) (a3)}] {};
 \node[shape=rectangle,draw=black,line width=1pt,minimum size=1.5cm,fit={(b1) (b3)}] {};

 \node[above=0.5cm of a1,font=\color{black}\Large\bfseries] {$A$};
 \node[above=0.5cm of b1,font=\color{black}\Large\bfseries] {$B$};

 \draw[-,Black] (a1) -- (b1);
 \draw[-,Black] (a1) -- (b2);
 \draw[-,Black] (a2) -- (b1);
 \draw[-,Black] (a2) -- (b3);
 \draw[-,Black] (a3) -- (b2);
 \draw[-,Black] (a3) -- (b3);
\end{tikzpicture}}
\label{p3}}\,\,\,\,\,\,\,\,\,\,\,\,\,\,\,\,\,\,\,\,\,\,\,\,\,\,\,\,\,\,\,\,\,\,\,\,\,\,\,\,\,\,\,\,\,\,\,\,\,\,\,\,\,\,\,\,\,\,\,\,\,\,\,\,\,\,\,\,\,\,\,\,\,\,\,\,\,\,\,\,
\subfigure[``Crossing edges"]{\resizebox{3cm}{4cm}{
 \begin{tikzpicture}[baseline=(current bounding box.south)]
\node[draw, black, circle] (a1) {1};
\node[draw, black, circle, below=0.32cm of a1] (a2) {2};
\node[draw, black, circle, below=0.32cm of a2] (a3) {3};
\node[black, below=1cm of a3] (a4) {};
\node[draw, black, circle, below=0.75cm of a4] (a5) {4};
\node[black, below=0.75cm of a5] (a6) {};

\node[shape=rectangle,draw=black,line width=2pt,minimum size=1.5cm,fit={(a1) (a3)}] {};
\node[shape=rectangle,draw=black,line width=2pt,minimum size=1.5cm,fit={(a4) (a6)}] {};
\node[above=0.5cm of a1,font=\color{black}\Large\bfseries] {$A$};

\node[draw, black, circle, right=5cm of a1] (b1) {12};
\node[draw, black, circle, below=0.17cm of b1] (b2) {13};
\node[draw, black, circle, below=0.17cm of b2] (b3) {23};
\node[draw, black, circle, below=1cm of b3] (b4) {14};
\node[draw, black, circle, below=0.2cm of b4] (b5) {24};
\node[draw, black, circle, below=0.2cm of b5] (b6) {34};

\node[shape=rectangle,draw=black,line width=2pt,minimum size=1.5cm,fit={(b1) (b3)}] {};
\node[shape=rectangle,draw=black,line width=2pt,minimum size=1.5cm,fit={(b4) (b6)}] {};
\node[above=0.5cm of b1,font=\color{black}\Large\bfseries] {$B$};

\draw[-,black] (a1) -- (b4);
\draw[-,black] (a2) -- (b5);
\draw[-,black] (a3) -- (b6);

\end{tikzpicture}}
\label{p4}}

 \caption{Partition of the Levi graph $L_1(4,2)$ }
      \label{fig:Partl142}
  \end{figure}

 In general, for ease of representation, we consider a partition of $V(L_1(m,k))$ as shown in Figure \ref{fig:plg}, where $A_1$ be the set of all vertices corresponding to $(k-1)$-element subsets of $[m]$ that do not contain the integer $m$, and $A_2$ be the set of all vertices corresponding to $(k-1)$-element subsets of $[m]$ that contain the integer $m$. Similarly, $B_1$ is the set of all $k$-element subsets of $[m]$ that do not contain the integer $m$, and $B_2$ be the remaining vertices in $L_1(m,k)$. Observe that $(A_1, B_1)$ forms the vertex set of the bipartite graph $ULG$ of $L_1(m,k)$, and $(A_2, B_2)$ that of the bipartite graph $LLG$ of $L_1(m,k)$. 
 Note that the presence of the integer `$m$' in each of the vertices of $A_2$ and absence of the integer `$m$' in the vertices of $B_1$ prevents any ``crossing edges" between $A_2$ and $B_1$. We make the following observation regarding ``crossing edges" of $L_1(m,k)$.

\begin{observation} \label{obcr}
    The ``crossing edges" form a perfect matching between $A_1$ and $B_2$. That is, 
    \begin{itemize}
        \item A ``crossing edge" has one end point in $A_1$ and the other in $B_2$,
        \item No two ``crossing edges" share an end vertex,
        \item There are $|A_1| = |B_2|$ ``crossing edges" in $L_1(m,k)$.
    \end{itemize}
     
\end{observation}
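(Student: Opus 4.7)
The plan is to unpack the definitions of $A_1, A_2, B_1, B_2$ together with the subset-containment rule for the edges of $L_1(m,k)$, and then verify the three bullet points in order with a short case analysis followed by a bijection.

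First, recall that an edge of $L_1(m,k)$ joins a $(k-1)$-subset $u$ to a $k$-subset $v$ precisely when $u \subsetneq v$. I would split into cases based on whether the integer $m$ lies in each endpoint. If $u \in A_2$ (so $m \in u$) and $v \in B_1$ (so $m \notin v$), then $u \subsetneq v$ would force $m \in v$, a contradiction, so no such edge exists. Combined with the fact that the edges inside $ULG$ are exactly the $(A_1,B_1)$-edges and the edges inside $LLG$ are exactly the $(A_2,B_2)$-edges, the remaining edges, the crossing edges, must be exactly those with one endpoint in $A_1$ and the other in $B_2$. This establishes the first bullet.

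Next, I would argue the matching property by exhibiting a direct bijection. For any $u \in A_1$, a neighbor $v \in B_2$ must be a $k$-subset that properly contains $u$ and contains $m$; since $|v \setminus u| = 1$ and $m \notin u$, the only possibility is $v = u \cup \{m\}$. Symmetrically, for $v \in B_2$, the unique neighbor in $A_1$ is $u = v \setminus \{m\}$. Hence each vertex of $A_1$ is incident to exactly one crossing edge, and likewise for each vertex of $B_2$, so no two crossing edges share an endpoint, which is the second bullet.

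Finally, the counting follows at once: $|A_1| = \binom{m-1}{k-1}$ and $|B_2| = \binom{m-1}{k-1}$, as both count subsets of $[m-1]$ of size $k-1$, and the bijection $u \mapsto u \cup \{m\}$ produces exactly $\binom{m-1}{k-1}$ crossing edges, giving the third bullet. I do not anticipate any real obstacle here; the statement reduces to elementary subset counting once the four parts of the partition are named, and the only mild care needed is to keep the case analysis on membership of $m$ exhaustive.
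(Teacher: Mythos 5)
Your proposal is correct and follows essentially the same route as the paper: the key step in both is the bijection $u \mapsto u \cup \{m\}$ between $A_1$ and $B_2$, together with the observation that this is the unique crossing neighbor in each direction. The only difference is that you fold the justification of the first bullet (no edges between $A_2$ and $B_1$) into the proof itself, whereas the paper disposes of that point in the text immediately preceding the observation.
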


\begin{proof}
    Since $A_1$ contains the set of all $(k-1)$-element subsets of $[m]$ without the integer $m$ and $B_2$ contains the set of all $k$-element subsets of $[m]$ with the integer $m$, we observe that $|A_1|=|B_2|$. Moreover, since each vertex in $B_2$ is obtained by adding $m$ to a vertex in $A_1$, there is an edge connecting the respective vertices in $A_1$ and $B_2$. Similarly for a vertex $v$ (which is a $k$-element subset) in $B_2$, there exists a unique adjacent vertex $u$ (a $(k-1)$-element subset that does not contain $m$) in $A_1$. Therefore, no ``crossing edges" share a common vertex. 
    
    \end{proof}

\begin{figure}[H]
  \centering
\resizebox{3cm}{4cm}{
\begin{tikzpicture}[baseline=(current bounding box.south)]

\node (a1) {$A_1$};
\node[below=0.6cm of a1] (a2) {};
\node[below=1cm of a2] (a3) {$A_2$};
\node[below=of a3] (a4) {};

\node[right=4cm of a1] (b1) {$B_1$};
\node[below=0.6cm of b1] (b2) {};
\node[below=1cm of b2] (b3) {$B_2$};
\node[below=of b3] (b4) {};

\node[shape=rectangle,draw=black,line width=1pt,fit={(a1) (a4)},text width=1.75cm, text height=5cm] {};
\node[shape=rectangle,draw=black,line width=1pt,fit={(a3) (a4)},text width=1.75cm, text height=2.6cm] {};
\node[shape=rectangle,draw=black,line width=1pt,fit={(b1) (b4)},text width=1.75cm, text height=5cm] {};
\node[shape=rectangle,draw=black,line width=1pt,fit={(b3) (b4)},text width=1.75cm, text height=2.6cm] {}; 

\node[above=1cm of a1,font=\color{black}\Large\bfseries] {$A$};
\node[above=1cm of b1,font=\color{black}\Large\bfseries] {$B$};

\end{tikzpicture}}
 \caption{Partition of Levi graph}
      \label{fig:plg}
  \end{figure}

\subsection{Lemmas}

The following lemma shows that $LLG$ of a Levi graph $L_1(m,k)$ is itself a Levi graph. 

\begin{lemma}\label{llg}
For $m \geq 3$ and $k \geq 3$, $LLG$ of $L_{1}(m,k) \cong L_{1}(m-1,k-1)$.    
   
\end{lemma}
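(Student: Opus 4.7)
The plan is to exhibit an explicit bijection between the vertex sets and verify that it preserves adjacency. Recall that $LLG$ is the induced subgraph of $L_1(m,k)$ on $A_2 \cup B_2$, where every vertex of $A_2 \cup B_2$ is a subset of $[m]$ containing the integer $m$. The guiding idea is that ``stripping off'' the element $m$ from each such subset should send $LLG$ to $L_1(m-1,k-1)$, whose parts consist of $(k-2)$- and $(k-1)$-element subsets of $[m-1]$.

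Concretely, I would define $\phi : V(LLG) \to V(L_1(m-1,k-1))$ by $\phi(S) = S \setminus \{m\}$. First I would check that $\phi$ has the right codomain: if $S \in A_2$ is a $(k-1)$-subset of $[m]$ containing $m$, then $\phi(S)$ is a $(k-2)$-subset of $[m-1]$; if $S \in B_2$ is a $k$-subset of $[m]$ containing $m$, then $\phi(S)$ is a $(k-1)$-subset of $[m-1]$. These are precisely the two parts of $L_1(m-1,k-1)$. The hypotheses $m \geq 3$ and $k \geq 3$ are needed here to ensure $k-2 \geq 1$ so that the smaller Levi graph $L_1(m-1,k-1)$ is defined in the usual sense.

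Next I would verify bijectivity. The map $\psi(T) = T \cup \{m\}$ is a well-defined inverse: starting from any subset of $[m-1]$ of the appropriate size, adjoining $m$ lands in $A_2$ or $B_2$ accordingly. So $\phi$ and $\psi$ are mutually inverse bijections between $V(LLG)$ and $V(L_1(m-1,k-1))$.

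Finally, adjacency. Take $u \in A_2$ and $v \in B_2$. Since both $u$ and $v$ contain $m$, we have the chain of equivalences
\[
u \subsetneq v \iff u\setminus\{m\} \subsetneq v\setminus\{m\} \iff \phi(u) \subsetneq \phi(v).
\]
The left side is precisely the adjacency relation in $LLG$ (which, being an induced subgraph of $L_1(m,k)$, inherits the containment adjacency), while the right side is the adjacency relation defining $L_1(m-1,k-1)$. Hence $\phi$ is a graph isomorphism, completing the proof. No real obstacle arises; the only subtlety is keeping track of which sizes and which ground set each part lives in, which the partition $(A_1,A_2,B_1,B_2)$ already makes transparent.
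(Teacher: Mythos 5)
Your proposal is correct and follows essentially the same route as the paper: both define the map that deletes the element $m$ from each vertex of $LLG$ and verify it is a bijection preserving the containment adjacency. Your version is if anything slightly cleaner, since you state the adjacency condition as a chain of equivalences and give the explicit inverse map.
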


\begin{proof}

The isomorphism between $LLG$ of $L_1(m,k)$ and $L_1(m-1,k-1)$ is obtained by removing the integer `$m$' from each vertex of $LLG$ of $L_1(m,k)$. Precisely, consider a mapping $f: V(LLG) \longrightarrow V(L_1(m-1,k-1))$ such that $f(\{a_1, a_2, \dots, a_t, m\}) = \{a_1, a_2, \dots, a_t\}$. If $t$ is $k-1$, then $f$ maps a vertex, which is a $k$-element subset of $[m]$ containing $m$ in $LLG$ to the corresponding $(k-1)$-element subset of $[m]$ which is in $L_1(m-1,k-1)$. Similarly, if $t=k-2$, then the $(k-1)$-element subset of $LLG$ is mapped to the corresponding $(k-2)$-element subset in $L_1(m-1,k-1)$. One can easily observe that this mapping is an isomorphism between $LLG$ of $L_1(m,k)$ and $L_1(m-1,k-1)$. For completeness of the proof we have added the arguments to support the isomorphism but one can skip it, if it is clear. \\

\noindent $f$ is one-one: 
For $t_1, t_2 \in \{k-1, k-2\}$, let $f(x_1)= \{a_1, a_2, \dots , a_{t_1}\}$ and $f(x_2)= \{b_1, b_2, \dots , b_{t_2}\}$ for the vertices $x_1=\{a_1, a_2, \dots , a_{t_1}, m\}$ and $x_2=\{b_1, b_2, \dots , b_{t_2}, m\}$ of $LLG$ of $L_1(m,k)$. Then, if $f(x_1)=f(x_2)$, then $\{a_1, a_2, \dots , a_{t_1}\} = \{b_1, b_2, \dots , b_{t_2}\}$ which implies $\{a_1, a_2, \dots , a_{t_1}, m\} = \{b_1, b_2, \dots , b_{t_2}, m\}$ which implies $x_1=x_2$.\\

\noindent $f$ is onto:
For $t_1=\{k-1, k-2\}$, let $y=\{a_1, a_2, \dots , a_{t_1}\}$ be a vertex of $L_1(m-1,k-1)$. Then $\{a_1, a_2, \dots , a_{t_1}, m\}$ is a vertex, say $x$, in $LLG$ of $L_1(m,k)$ such that $f(x)=y$.\\

\noindent $f$ preserves adjacency:
For $t_1, t_2 \in \{k-1, k-2\}$, let $x=\{a_1, a_2, \dots , a_{t_1}, m\}$ and $y=\{b_1, b_2, \dots , b_{t_2}, m\}$. Without loss of generality $\{a_1, a_2, \dots , a_{t_1}, m\} \subseteq \{b_1, b_2, \dots , b_{t_2}, m\}$. That is $\{a_1, a_2, \dots , a_{t_1}\} \subseteq \{b_1, b_2, \dots , b_{t_2}\}$, which implies $f(x) \subseteq f(y)$. This implies that $f(x)$ and $f(y)$ are adjacent.

\end{proof}

By construction itself, one may observe that $ULG$ of $L_1(m,k)$ is isomorphic to $L_1(m-1,k)$.

\begin{lemma} \label{ulg}
     $ULG$ of $L_1(m,k)$ is the Levi graph $L_1(m-1,k)$, for $m\geq 3$, $k\geq 2$.
     \qed
     
\end{lemma}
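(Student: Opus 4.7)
The plan is to exhibit an explicit isomorphism by the identity map on subsets, following exactly the structural idea already used in Lemma~\ref{llg} but without removing the integer $m$ (since by definition $m$ never appears in any vertex of $ULG$). The main observation is that the vertex set of $ULG$ is, by construction, precisely the collection of $(k-1)$-element and $k$-element subsets of $[m]$ that avoid the integer $m$, and these are in canonical bijection with the $(k-1)$-element and $k$-element subsets of $[m-1]=\{1,2,\dots,m-1\}$, which by definition are the vertices of $L_1(m-1,k)$.

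First, I would define $g:V(ULG) \longrightarrow V(L_1(m-1,k))$ by $g(S)=S$ for every subset $S\subseteq [m]\setminus\{m\}=[m-1]$ that appears as a vertex of $ULG$. The map is well-defined because every vertex of $ULG$ is a subset of $[m-1]$, and its image has the same cardinality ($k-1$ or $k$), which are exactly the vertex sizes of $L_1(m-1,k)$. Injectivity is immediate since $g$ is the identity on subsets, and surjectivity follows because every $(k-1)$- or $k$-element subset of $[m-1]$ lies in $[m]$ and does not contain $m$, hence belongs to $V(ULG)$.

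Next, I would verify adjacency preservation in both directions. If $u,v$ are adjacent in $ULG$, then by the definition of $L_1(m,k)$ one is a $(k-1)$-element subset properly contained in the other, which is a $k$-element subset; the same proper containment relation between $g(u)$ and $g(v)$ (viewed now as subsets of $[m-1]$) witnesses adjacency in $L_1(m-1,k)$. Conversely, if $g(u)$ and $g(v)$ are adjacent in $L_1(m-1,k)$, then the corresponding proper containment between $u$ and $v$ holds in $[m]$, and since neither contains $m$ both vertices lie in $ULG$, so the edge $uv$ is present in the induced subgraph $ULG$.

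I do not expect any real obstacle here: the lemma is essentially a definitional unpacking, parallel to Lemma~\ref{llg} but easier because no relabeling is required. The only minor thing to note explicitly is that the hypotheses $m\geq 3$ and $k\geq 2$ ensure that $[m-1]$ contains at least two elements and that $L_1(m-1,k)$ is a non-degenerate Levi graph of order one, so that the isomorphism statement is meaningful.
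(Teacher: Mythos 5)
Your proof is correct and takes the same route the paper intends: the paper offers no written proof for this lemma, simply asserting that the isomorphism holds ``by construction,'' and your identity-map argument is exactly the definitional unpacking of that assertion (the vertices of $ULG$ are precisely the $(k-1)$- and $k$-element subsets of $[m-1]$, and adjacency is containment in both graphs). No gaps.
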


Now we discuss an inequality which is used to prove Theorem \ref{m1} and in the later discussions. Observe that the inequality, 
\begin{equation}\label{ab}  
 \left \lfloor \frac{a}{2} \right\rfloor +  \left\lfloor \frac{b}{2} \right\rfloor \leq  \left \lfloor \frac{a+b}{2} \right \rfloor
 \end{equation} 
 is valid for all natural numbers $a$ and $b$. 
 This is because when both $a$ and $b$ are odd numbers, since $\left \lfloor \frac{a}{2} \right \rfloor  = \frac{a-1}{2} $ and $\left \lfloor \frac{b}{2} \right \rfloor  = \frac{b-1}{2} $, we have
$$ \left \lfloor \frac{a}{2} \right\rfloor +  \left\lfloor \frac{b}{2} \right\rfloor =\frac{a+b-2}{2} \leq \frac{a+b}{2}.$$ 

Similarly, when $a$ and $b$ are even numbers, then $$ \left \lfloor \frac{a}{2} \right\rfloor +  \left\lfloor \frac{b}{2} \right\rfloor =\frac{a+b}{2} \leq \frac{a+b}{2}.$$ 

If either $a$ or $b$ is odd, then also it is straight forward to see that $$ \left \lfloor \frac{a}{2} \right\rfloor +  \left\lfloor \frac{b}{2} \right\rfloor \leq \frac{a+b}{2}.$$  

Therefore, by choosing $a=\binom{m+1}{k}$ and $b=\binom{m+1}{k-1}$ in the \eqref{ab} and using Pascal's formula \cite{brualdi}, 
 \begin{equation*}
 \binom{m+1}{k} = \binom{m}{k} + \binom{m}{k-1}
  \end{equation*} we have Lemma \ref{pascal}.

\begin{lemma}\label{pascal}
For all $m, k \in \mathbb{N}$, with $1\leq k \leq m$ we have    
\begin{equation}\label{pascalst}
\left\lfloor \frac{\binom{m}{k-1} + \binom{m}{k}}{2} \right\rfloor + \left\lfloor \frac{\binom{m}{k-2} + \binom{m}{k-1}}{2} \right\rfloor \leq \left\lfloor \frac{\binom{m+1}{k-1} + \binom{m+1}{k}}{2} \right\rfloor.   
\end{equation}

\end{lemma}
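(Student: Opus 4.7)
The plan is to obtain Lemma \ref{pascal} as an immediate consequence of the already-established inequality \eqref{ab}, combined with Pascal's identity $\binom{m+1}{k}=\binom{m}{k}+\binom{m}{k-1}$. The structure of \eqref{pascalst} is suggestive: the left-hand side is a sum of two floors of sums of binomial coefficients, while the right-hand side is a single floor whose numerator is a sum of two binomial coefficients. This is exactly the pattern $\lfloor a/2\rfloor+\lfloor b/2\rfloor \le \lfloor (a+b)/2\rfloor$ from \eqref{ab}, provided we choose $a$ and $b$ so that $a+b$ collapses to the RHS numerator via Pascal.

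Concretely, I would set
\begin{equation*}
a \;=\; \binom{m}{k-1}+\binom{m}{k}, \qquad b \;=\; \binom{m}{k-2}+\binom{m}{k-1},
\end{equation*}
where we use the convention $\binom{m}{-1}=0$ to cover the corner case $k=1$. Both $a,b$ are nonnegative integers, so \eqref{ab} applies and gives
\begin{equation*}
\left\lfloor \frac{a}{2}\right\rfloor + \left\lfloor \frac{b}{2}\right\rfloor \;\le\; \left\lfloor \frac{a+b}{2}\right\rfloor.
\end{equation*}
The left-hand side here is precisely the LHS of \eqref{pascalst}. It remains to simplify $a+b$. Regrouping the four binomial coefficients as
\begin{equation*}
a+b \;=\; \left[\binom{m}{k}+\binom{m}{k-1}\right] + \left[\binom{m}{k-1}+\binom{m}{k-2}\right]
\end{equation*}
and applying Pascal's identity to each bracket yields $a+b = \binom{m+1}{k}+\binom{m+1}{k-1}$, which is exactly the numerator on the RHS of \eqref{pascalst}. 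Substituting and taking floors completes the argument.

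There is essentially no substantive obstacle here; the only care required is the bookkeeping in choosing $a$ and $b$ so that the two pairs inside $a+b$ are of the form $\binom{m}{r}+\binom{m}{r-1}$ needed by Pascal. (Note that the choice $a=\binom{m+1}{k}$, $b=\binom{m+1}{k-1}$ hinted at in the paragraph preceding the lemma amounts to the same computation read in reverse: applying \eqref{ab} to those values and then expanding via Pascal recovers the LHS of \eqref{pascalst}.) In either orientation, the lemma reduces to one application of \eqref{ab} together with two applications of Pascal's formula.
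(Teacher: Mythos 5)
Your proposal is correct and is essentially identical to the paper's own argument: the paper applies \eqref{ab} with $a=\binom{m+1}{k}$ and $b=\binom{m+1}{k-1}$ and then expands each via Pascal's formula, which, as you note yourself, is the same computation as yours read in the opposite direction. The only (harmless) addition on your part is making the convention $\binom{m}{-1}=0$ explicit for the corner case $k=1$.
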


The following theorems discuss `border' cases $k=m$ and $k=2$ of Theorem \ref{m1}.

\begin{theorem}\label{star} 
For $m\geq 2$ and $k=m$, the path number of $L_1(m,k)$ on $n=m+1$ vertices is at most $\lfloor \frac{m+1}{2} \rfloor$.
\end{theorem}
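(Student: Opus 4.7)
The plan is to identify $L_1(m,m)$ explicitly as a very simple graph and then write down a path decomposition by hand. Since $k=m$, the part $B$ consists of all $m$-element subsets of $[m]$, of which there is only one, namely $[m]$ itself. The part $A$ consists of all $(m-1)$-element subsets of $[m]$, of size $m$, each of which is properly contained in $[m]$. Hence every vertex of $A$ is adjacent to the unique vertex in $B$ and there are no other edges. In other words, $L_1(m,m)$ is the star $K_{1,m}$ on $n=m+1$ vertices.

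Once we have recognized the graph as a star, the path decomposition is straightforward. Label the vertices of $A$ by $u_1,u_2,\dots,u_m$ and let $v$ be the unique vertex of $B$. The plan is to pair up the $m$ edges $u_iv$ into paths of length two through the center $v$. Concretely, I would set $P_i = u_{2i-1}\,v\,u_{2i}$ for $1\leq i\leq \lfloor m/2 \rfloor$. This uses the $2\lfloor m/2 \rfloor$ edges incident to the vertices $u_1,\dots,u_{2\lfloor m/2\rfloor}$. If $m$ is odd, the single remaining edge $u_m v$ is taken as its own one-edge path $P_{\lceil m/2 \rceil}$.

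Counting, the number of paths in this decomposition is $\lfloor m/2 \rfloor$ when $m$ is even and $\lfloor m/2 \rfloor + 1 = \lceil m/2\rceil$ when $m$ is odd, in both cases equal to $\lfloor (m+1)/2 \rfloor$. Since every edge of $K_{1,m}$ lies in exactly one of these paths, this is a valid path decomposition, and so $pn(L_1(m,m)) \leq \lfloor (m+1)/2 \rfloor$, establishing the theorem.

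There is essentially no obstacle here: the only thing to observe is the structural identification of $L_1(m,m)$ as a star, after which the decomposition is canonical. (As a side remark, the bound is in fact tight, because the $m$ leaves all have odd degree $1$ and so by the odd-degree condition each must be an endpoint of some path, forcing at least $\lceil m/2 \rceil = \lfloor (m+1)/2 \rfloor$ paths; but this lower bound is not needed for the theorem as stated.)
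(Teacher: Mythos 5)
Your proposal is correct and follows essentially the same route as the paper: identify $L_1(m,m)$ as the star $K_{1,m}$ and decompose its $m$ edges into $\lfloor m/2 \rfloor$ length-two paths through the center, plus one extra single-edge path when $m$ is odd, for a total of $\lceil m/2 \rceil = \lfloor \frac{m+1}{2} \rfloor$ paths. Your closing remark that the bound is tight (via the odd-degree condition on the leaves) is a correct observation beyond what the theorem requires.
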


\begin{proof}
  
  Observe that when $k=m$, the right side $B$ (see Figure \ref{l1mm}) of the bipartite graph $L_1(m,m)$ contains a unique vertex (since $\binom{m}{m}=1$) representing the set $\{1,2,\dots,m\}$ and the other side $A$ contains all $(m-1)$ subsets of $\{1,2,\dots,m\}$. Hence $L_1(m,m)$ represents the star graph $K_{1,m}$. Now to discuss the path decomposition of this graph, define $x_1, x_2, \dots , x_m$ as the vertices in $A$ and $y=\{1,2,\dots,m\}$ as the vertex in $B$ of $L_1(m,m)$.

\begin{figure}[H]
  \centering
\resizebox{3cm}{4cm}{
\begin{tikzpicture}[baseline=(current bounding box.south)]

\node[draw,black, circle] (a1) {$x_{1}$};
\node[draw,black, circle,below=0.3cm of a1] (a2) {$x_{2}$};
\node[below=0.3cm of a2] (a3) {.};
\node[below=0.3cm of a3] (a4) {.};
\node[below=0.3cm of a4] (a5) {.};
\node[draw,black, circle,below=0.3cm of a5] (a6) {$x_{n}$};

\node[ right=4cm of a1] (b1) {};
\node[ below=0.7cm of b1] (b2) {};
\node[draw,black, circle,below=0.7cm of b2] (b3) {$y$};
\node[ below=0.7cm of b3] (b4) {};
\node[ below=0.7cm of b4] (b5) {};

\node[shape=rectangle,draw=black,line width=1pt,minimum size=1cm,fit={(a1) (a6)}] {};
\node[shape=rectangle,draw=black,line width=1pt,minimum size=1cm,fit={(b1) (b5)}] {};

\node[above=0.5cm of a1,font=\color{black}\Large\bfseries] {$A$};
\node[above=0.5cm of b1,font=\color{black}\Large\bfseries] {$B$};

\draw[-,red] (a1) -- (b3);
\draw[-,red] (a2) -- (b3);
\draw[-,blue] (a3) -- (b3);
\draw[-,green] (a6) -- (b3);

\end{tikzpicture}}
 \caption{Levi graph $L_1(m,m)$}
      \label{l1mm}
  \end{figure}

 Consider the collection of paths $\mathcal{P} = \{P_1, P_2, \dots P_t\}$ in the graph $L_{1}(m,m)$ where $P_i:x_{2i-1}, y,  x_{2i}$ is the path. For example, $P_1: x_1, y, x_3$; $P_2: x_3, y, x_4$; etc. When $m$ is even, the decomposition requires $t=\frac{m}{2}$ paths of the form $P_i$ as described above but when $m$ is odd, the first $t=\frac{m-1}{2}$ paths follow the $P_i$ structure, and the final path is the single edge $(x_m, y)$. This results in a total of $\frac{m+1}{2}$ paths. Hence, in both cases, that is, when $m$ is even and $m$ is odd, we have a path decomposition of size $\lceil \frac{m}{2} \rceil $. Observe that since $L_{1}(m,m)$ contains $m+1$ vertices, this proof implies that the path number of $L_{1}(m,m)$ is at most $\lfloor \frac{m+1}{2} \rfloor$.

\end{proof}

 Now, in the other extreme case, that is, when $k=2$, we show that the path number of $L_1(m,k)$ on $n$ vertices is bounded above by $\lfloor \frac{n}{2}\rfloor $.
\begin{theorem}\label{t1}
    For all $m \ge 2$, the path number of $L_{1}(m,2)$ on $n$ vertices is at most $\lfloor \frac{n}{2}\rfloor$.
\end{theorem}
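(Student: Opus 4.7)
The plan is to prove Theorem~\ref{t1} by induction on $m$, exploiting the partition of $L_1(m,2)$ into $ULG$, $LLG$, and crossing edges from Section~\ref{section 2}, together with inequality~\eqref{ab}. The base case $m=2$ is immediate: $L_1(2,2)$ is just the three-vertex path $\{1\}-\{1,2\}-\{2\}$, so its path number equals $1=\lfloor 3/2 \rfloor$.

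For the inductive step with $m\ge 3$, I will first invoke Lemma~\ref{ulg} to identify $ULG$ with $L_1(m-1,2)$ and apply the induction hypothesis to obtain a path decomposition $\mathcal{P}_U$ of $ULG$ of size at most $\lfloor n_{m-1}/2 \rfloor$, where $n_{m-1}=(m-1)+\binom{m-1}{2}$. The remaining edges are those of $LLG$ together with the crossing edges, which I will handle as a single block. The key observation is that, for $k=2$, $LLG$ reduces to the star centred at $\{m\}$ with the $m-1$ leaves $\{1,m\},\dots,\{m-1,m\}$, while Observation~\ref{obcr} guarantees that each leaf $\{i,m\}$ is matched by a crossing edge to the singleton $\{i\}\in A_1$. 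My strategy is to pair up the leaves of this star arbitrarily and, for each chosen pair $\{\{i_1,m\},\{i_2,m\}\}$, to form the path
\[
\{i_1\}-\{i_1,m\}-\{m\}-\{i_2,m\}-\{i_2\};
\]
if $m-1$ is odd, the one unpaired leaf $\{j,m\}$ contributes the short path $\{j\}-\{j,m\}-\{m\}$. This yields $\lceil (m-1)/2\rceil = \lfloor m/2\rfloor$ paths covering every edge of $LLG$ and every crossing edge exactly once.

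Combining this collection with $\mathcal{P}_U$ gives a path decomposition of $L_1(m,2)$ of size at most $\lfloor n_{m-1}/2\rfloor + \lfloor m/2\rfloor$. Applying inequality~\eqref{ab} with $a=n_{m-1}$ and $b=m$, and noting that $n_m=n_{m-1}+m$, I get
\[
\left\lfloor \frac{n_{m-1}}{2}\right\rfloor + \left\lfloor \frac{m}{2} \right\rfloor \;\le\; \left\lfloor \frac{n_{m-1}+m}{2}\right\rfloor = \left\lfloor \frac{n_m}{2}\right\rfloor,
\]
which closes the induction. The only step requiring explicit verification is that the walks built inside the $LLG$+crossings block are genuine paths and that they are edge-disjoint from $\mathcal{P}_U$; both follow immediately once I note that $V(ULG)\cap V(LLG)=\emptyset$ and that the distinctness $i_1\neq i_2$ forces the five listed vertices of each long path to be pairwise distinct. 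I expect this bookkeeping, rather than any deep combinatorial argument, to be the only real obstacle, and it is essentially dictated by the structure recorded in Observation~\ref{obcr}.
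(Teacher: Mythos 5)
Your proposal is correct and follows essentially the same route as the paper: induction on $m$, the inductive hypothesis applied to $ULG\cong L_1(m-1,2)$, the star $LLG$ together with the matched crossing edges absorbed into $\lfloor m/2\rfloor$ paths of the form $\{i_1\}$--$\{i_1,m\}$--$\{m\}$--$\{i_2,m\}$--$\{i_2\}$, and inequality~\eqref{ab} to close the count. The only (immaterial) difference is that the paper first decomposes the star via Theorem~\ref{star} and then attaches the crossing edges at the degree-one endpoints, whereas you build the extended paths directly.
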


\begin{proof}
    We prove this by induction on $m$.\\ 
    \textbf{Base Cases}
    \begin{enumerate}
        \item Case $m=2$: The graph $L_1(2,2)$ is a star graph, and the result follows directly from Theorem \ref{star}.
        \item Case $m=3$: The graph $L_1(3,2)$ is shown in figure \ref{l32} and the paths $P_1: \{1,3\}, 1, \{1,2\}, 2, \{2,3\}$\ and $P_2: \{1,3\}, 3, \{2,3\}$ decompose the edge set into paths.
    \end{enumerate} 
    \textbf{Inductive Step ($m=4$)}:\\
  Consider $L_1(4,2)$ (Figure \ref{l42}). Observe that $LLG$ of $L_1(4,2)$ is a star graph which contains the vertices $4$, $\{1,4\}$, $\{2,4\}$ and $\{3,4\}$ with the edges $(4, \{1,4\})$, $(4, \{2,4\})$ and $(4, \{3,4\})$. We use the path $P': \{1,4\}, 4, \{2,4\}$ and $P'': 4, \{3,4\}$ to decompose $LLG$ of $L_1(4,2)$. By lemma \ref{ulg}, the $ULG$ of $L_1(4,2)$ is isomorphic to $L_1(3,2)$ and the above paragraph in this proof discusses a path decomposition supporting result. The remaining edges in $L_1(4,2)$ are the ``crossing edges" (see Introduction). Before dealing with the ``crossing edges", let us understand the present status of the path decomposition that we have made for $L_1(4,2)$. There are $n=10$ vertices in $L_1(4,2)$ and we have made four paths in total to decompose the edge sets belonging to $ULG$ and $LLG$ of $L_1(4,2)$. Since Galli's conjecture allows at most $\frac{n}{2}=5$ paths, we are left with at most one path to manage the crossing edges. In the following discussion, we show that the ``crossing edges" can be attached to the existing paths that we have already made, and this shows that the path number of $L_1(4,2)$ is at most $4$. From figure \ref{l42}, observe that $(1,\{1,4\})$, $(2,\{2,4\})$, $(3,\{3,4\})$ are the ``crossing edges" of $L_1(4,2)$. Note that if we consider $L_1(4,2)$ without the ``crossing edges", the vertices $\{1,4\}$, $\{2,4\}$, and $\{3,4\}$ are odd degree vertices and they are end vertices of some path in any path decomposition of $L_1(4,2)$ minus crossing edges. In our case, $\{1,4\}$ and $\{2,4\}$ are the end point of the path $P': \{1,4\}, 4, \{2,4\}$ and $\{3,4\}$ is an end point of the path $P'': 4, \{3,4\}$ in the path decomposition we have made. We add the ``crossing edges" $(1,\{1,4\})$ and $(2,\{2,4\})$ to the path $P'$ and $(3,\{3,4\})$ to the path $P''$, resulting a path decomposition of $L_1(4,2)$ in the following way.\\
    $P_1: \{1,3\}, 1, \{1,2\}, 2, \{2,3\}$\\
    $P_2: \{1,3\}, 3, \{2,3\}$\\
    $P_3: 1, \{1,4\}, 4, \{2,4\}, 2$\\
    $P_4: 4, \{3,4\}, 3$\\
    which proves the statement in this case.\\

    \textbf{Note:} The proof of the case $L_1(4,2)$ can be written just by writing the path $P_1, P_2, P_3, P_4$ directly, but we have gone through a deeper discussion to explain the structure of the graph and the idea behind the path decomposition that is used in the general case.

\begin{minipage}[b]{0.45\textwidth}
\begin{figure}[H]
  \centering

\resizebox{3cm}{4cm}{
\begin{tikzpicture}[baseline=(current bounding box.south)]

\node[draw,black, circle] (a1) {1};
\node[draw,black, circle,below=0.8cm of a1] (a2) {2};
\node[draw,black, circle,below=0.8cm of a2] (a3) {3};
\node[draw,black, circle,below=0.8cm of a3] (a4) {4};
\node[below=0.7cm of a4] (a5) {};

\node[draw,black, circle,right=4cm of a1] (b1) {12};
\node[draw,black, circle,below=0.3cm of b1] (b2) {13};
\node[draw,black, circle,below=0.3cm of b2] (b3) {23};
\node[draw,black, circle,below=0.3cm of b3] (b4) {14};
\node[draw,black, circle,below=0.3cm of b4] (b5) {24};
\node[draw,black, circle,below=0.3cm of b5] (b6) {34};

\node[shape=rectangle,draw=black,line width=1pt,minimum size=1.5cm,fit={(a1) (a5)}] {};
\node[shape=rectangle,draw=black,line width=1pt,minimum size=1.5cm,fit={(b1) (b6)}] {};

\node[above=0.5cm of a1,font=\color{black}\Large\bfseries] {$A$};
\node[above=0.5cm of b1,font=\color{black}\Large\bfseries] {$B$};

\draw[-,red] (a1) -- (b1);
\draw[-,red] (a1) -- (b2);
\draw[-,blue] (a1) -- (b4);
\draw[-,red] (a2) -- (b1);
\draw[-,red] (a2) -- (b3);
\draw[-,blue] (a2) -- (b5);
\draw[-,green] (a3) -- (b2);
\draw[-,green] (a3) -- (b3);
\draw[-,brown] (a3) -- (b6);
\draw[-,blue] (a4) -- (b4);
\draw[-,blue] (a4) -- (b5);
\draw[-,brown] (a4) -- (b6);

\end{tikzpicture}}					
\caption{$L_1(4,2)$}
      \label{l42}	
	
\end{figure}
\end{minipage}
  \hfill
  \begin{minipage}[b]{0.45\textwidth}
\begin{figure}[H]
  \centering

  \resizebox{3cm}{3cm}{
\begin{tikzpicture}[baseline=(current bounding box.south)]

\node[draw,black, circle] (a1) {1};
\node[draw,black, circle,below=0.5cm of a1] (a2) {2};
\node[draw,black, circle,below=0.5cm of a2] (a3) {3};

\node[draw,black, circle,right=4cm of a1] (b1) {12};
\node[draw,black, circle,below=0.3cm of b1] (b2) {13};
\node[draw,black, circle,below=0.3cm of b2] (b3) {23};

\node[shape=rectangle,draw=black,line width=1pt,minimum size=1.5cm,fit={(a1) (a3)}] {};
\node[shape=rectangle,draw=black,line width=1pt,minimum size=1.5cm,fit={(b1) (b3)}] {};

\node[above=0.5cm of a1,font=\color{black}\Large\bfseries] {$A$};
\node[above=0.5cm of b1,font=\color{black}\Large\bfseries] {$B$};

\draw[-,red] (a1) -- (b1);
\draw[-,red] (a1) -- (b2);
\draw[-,red] (a2) -- (b1);
\draw[-,red] (a2) -- (b3);
\draw[-,green] (a3) -- (b2);
\draw[-,green] (a3) -- (b3);

\end{tikzpicture}}	
   \caption{$L_1(3,2)$}
      \label{l32}
	\end{figure}
    \end{minipage}
    
\noindent \textbf{Induction hypothesis:} The statement of Theorem \ref{t1} is valid for all $L_1(t,2)$, where $t < m$.\\
\textbf{Proof of $L_1(m,2)$:} Here also we use an argument similar to the base case. Observe that $LLG$ of $L_1(m,2)$ is a star graph with center vertex $m$. In this case, Theorem \ref{star} gives a path decomposition $\mathcal{P}_1$ of size at most $\lfloor \frac{m}{2} \rfloor$. According to Lemma \ref{ulg}, $ULG$ of $L_1(m,2)$ is isomorphic to $L_1(m-1,2)$ and by the induction hypothesis there exists a path decomposition $\mathcal{P}_2$ of size at most $\left\lfloor \frac{\binom{m-1}{1} + \binom{m-1}{2}}{2} \right\rfloor$. Observe that the edge set of $LLG$ of $L_1(m,2)$ and that of $ULG$ of $L_1(m,2)$ are disjoint. Hence, we have made $\lfloor \frac{m}{2} \rfloor+\left\lfloor \frac{\binom{m-1}{1} + \binom{m-1}{2}}{2} \right\rfloor$ paths to include the edges of $LLG$ and $ULG$ of $L_1(m,2)$. By lemma \ref{pascal} and Pascal's formula, we know that $\left\lfloor \frac{m}{2} \right\rfloor + \left\lfloor \frac{\binom{m-1}{1} + \binom{m-1}{2}}{2} \right\rfloor \leq \left\lfloor \frac{\binom{m}{1} + \binom{m}{2}}{2} \right\rfloor$ which is the upper bound mentioned in the statement of Theorem \ref{t1} in the case of $L_1(m,2)$. That is, we have to add the remaining ``crossing edges" without increasing the number of paths. Again, we make use of an argument similar to the base case.

Observe that being the pendent vertices of a star graph, each vertex in $B_2$ (see figure \ref{fig:plg}) has degree one, and it is an end vertex of some path in $\mathcal{P}_2$. Since we have $|B_2|$ ``crossing edges" (see observation \ref{obcr}), we attach ``crossing edges" to the paths having end vertex in $B_2$ and denote this new set of paths as $\mathcal{P'}_2$. Then $\mathcal{P}_1 \cup \mathcal{P'}_2$ is the path decomposition of $L_1(m,2)$ of the desired size.

\end{proof}

\section{Proof of Theorem 1.1} \label{section 3}
 For $m=2$, the integer $k$ should be $2$ and Theorem \ref{star} proves the main result in this case. For $m=3$, the integer $k$ can take value $2$ or $3$. Theorem \ref{t1} and Theorem \ref{star} prove the result for $L_1(3,2)$ and $L_1(3,3)$, respectively. For $k=2$ and $k=m$ the statement of Theorem \ref{m1} is valid by Theorem \ref{t1} and Theorem \ref{star}. The following theorem proves the rest of the cases.

 \begin{theorem} \label{pm1}
    For $m \geq 4$ and $3 \leq k \leq m-1$, the path number of $L_1(m,k)$ on $n$ vertices is at most $\lfloor \frac{n}{2} \rfloor$. 
\end{theorem}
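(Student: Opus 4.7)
The plan is to proceed by strong induction on $m$, with base case $m=4$, $k=3$, where $L_1(4,3)$ has $n=10$ vertices and a decomposition into $5$ paths can be exhibited directly. For the inductive step, I would partition the edges of $L_1(m,k)$ into those of $ULG$, those of $LLG$, and the crossing edges, exactly as set up in Section \ref{section 2}.

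By Lemma \ref{ulg}, $ULG\cong L_1(m-1,k)$, so it admits a path decomposition $\p_U$ of size at most $\lfloor \frac{\binom{m-1}{k-1}+\binom{m-1}{k}}{2}\rfloor$, furnished either by the induction hypothesis or, in the boundary case $k=m-1$, by Theorem \ref{star}. By Lemma \ref{llg}, $LLG\cong L_1(m-1,k-1)$, so it admits a path decomposition $\p_L$ of size at most $\lfloor \frac{\binom{m-1}{k-2}+\binom{m-1}{k-1}}{2}\rfloor$, via the induction hypothesis or, if $k=3$, via Theorem \ref{t1}. Since the edge sets of $ULG$ and $LLG$ are disjoint, $\p_U\cup\p_L$ is a partial path decomposition of $L_1(m,k)$; by Lemma \ref{pascal} applied with $m-1$ in place of $m$, its total size is at most $\lfloor n/2\rfloor$, so no new paths can be introduced to cover the remaining crossing edges.

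It therefore remains to absorb the crossing edges into the existing paths of $\p_U\cup\p_L$. By Observation \ref{obcr} there are exactly $|A_1|=|B_2|$ crossing edges, forming a perfect matching between $A_1$ and $B_2$. For a crossing edge $uv$ with $u\in A_1$ and $v\in B_2$ to be absorbed, I need either $u$ to be an endpoint of some path in $\p_U$ or $v$ to be an endpoint of some path in $\p_L$. Since $v\in B_2$ has degree $k-1$ in $LLG$ while $u\in A_1$ has degree $m-k$ in $ULG$, the odd-degree condition immediately forces the desired endpoint whenever $k$ is even or $m-k$ is odd; only the case in which both $k$ and $m$ are odd requires additional care.

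The main obstacle is therefore to guarantee, even in that bad-parity case, that every crossing edge has at least one endpoint that is already a path endpoint of $\p_U\cup\p_L$. My plan is to strengthen the inductive hypothesis with an endpoint invariant---for instance, requiring the recursive decomposition of $L_1(m-1,k-1)$ to have every vertex of its $B$-side (which is precisely $B_2$ in $L_1(m,k)$) as an endpoint of some path. Propagating this invariant through the recursion, possibly by surgically splitting and re-concatenating paths in $\p_L$ to expose the required vertices as endpoints without increasing the total path count, is the technical core of the argument. Once the invariant holds, each crossing edge is appended or prepended to a witnessing path exactly as done for $L_1(4,2)$ in the proof of Theorem \ref{t1}, and the induction closes.
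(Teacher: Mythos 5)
Your setup (induction on $m$, splitting into $ULG\cong L_1(m-1,k)$, $LLG\cong L_1(m-1,k-1)$ and the matching of crossing edges, budgeting via Lemma \ref{pascal}, and absorbing crossing edges at odd-degree endpoints) matches the paper's argument exactly for two of the three parity cases: when $k$ is even you attach at $B_2$ (degree $k-1$ odd in $LLG$), and when $m-k$ is odd you attach at $A_1$ (degree $m-k$ odd in $ULG$). The gap is in the remaining case, $m$ and $k$ both odd, which you correctly identify as problematic but then only sketch a fix for. Declaring the propagation of an endpoint invariant to be ``the technical core'' without carrying it out leaves the proof incomplete; worse, the specific invariant you propose is in general incompatible with your own path budget. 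For example, for $L_1(7,3)$ you would need every vertex of the $B$-side of $LLG\cong L_1(6,2)$ (all $15$ two-subsets) to be a path endpoint, on top of the $6$ one-subsets that are forced to be endpoints by their odd degree; that is $21$ distinct endpoint vertices, requiring at least $11$ paths, while your allotment for $\mathcal{P}_L$ is $\lfloor 21/2\rfloor=10$.

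The paper disposes of this case without any invariant: if $m$ and $k$ are both odd, then every $(k-1)$-subset has degree $m-k+1$ (odd) and every $k$-subset has degree $k$ (odd), so $L_1(m,k)$ is an odd graph, and the result of Lov\'asz \cite{lovas} already gives a path decomposition of size $\frac{n}{2}$ directly, with no recursion at all. You should replace your proposed strengthening of the induction hypothesis with this observation; the rest of your argument then closes as in the paper.
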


\begin{proof}
We prove the result by using induction on $m$.\\

\noindent\textbf{Base case:} When $m=4$, the value of $k$ can be $2,3$ or $4$. The cases $k=2$ and $k=4$ follows from Theorem \ref{t1} and Theorem \ref{star}, respectively. When $k=3$, figure \ref{l143} shows a path decomposition of $L_1(4,3)$ having size $\left\lfloor \frac{\binom{4}{2} + \binom{4}{3}}{2} \right\rfloor = 5$.

\begin{figure}[H]
  \centering

  \resizebox{3cm}{4cm}{
  \begin{tikzpicture}[baseline=(current bounding box.south)]
\node[draw,black, circle] (a1) {12};
\node[draw,black, circle,below=0.1cm of a1] (a2) {13};
\node[draw,black, circle,below=0.1cm of a2] (a3) {23};
\node[draw,black, circle,below=0.1cm of a3] (a4) {14};
\node[draw,black, circle,below=0.1cm of a4] (a5) {24};
\node[draw,black, circle,below=0.1cm of a5] (a6) {34};

\node[draw,black, circle,right=4cm of a1] (b1) {123};
\node[draw,black, circle,below=0.5cm of b1] (b2) {124};
\node[draw,black, circle,below=0.5cm of b2] (b3) {134};
\node[draw,black, circle,right=4cm of a6] (b4) {234};

\node[shape=rectangle,draw=black,line width=1pt,minimum size=1.5cm,fit={(a1) (a6)}] {};
\node[shape=rectangle,draw=black,line width=1pt,minimum size=1.5cm,fit={(b1) (b4)}] {};

\node[above=0.5cm of a1,font=\color{black}\Large\bfseries] {$A$};
\node[above=0.5cm of b1,font=\color{black}\Large\bfseries] {$B$};

\draw[-,red] (a1) -- (b1);
\draw[-,red] (a1) -- (b2);
\draw[-,red] (a2) -- (b1);
\draw[-,red] (a2) -- (b3);
\draw[-,green] (a3) -- (b1);
\draw[-,green] (a3) -- (b4);
\draw[-,yellow] (a4) -- (b2);
\draw[-,yellow] (a4) -- (b3);
\draw[-,yellow] (a5) -- (b2);
\draw[-,yellow] (a5) -- (b4);
\draw[-,brown] (a6) -- (b3);
\draw[-,brown] (a6) -- (b4);

\end{tikzpicture}}
\caption{Path decomposition of $L_1(4,3)$}
      \label{l143}

\end{figure}

\noindent\textbf{Induction hypothesis:} The statement of Theorem \ref{pm1} is valid for all $L_1(t,k)$, where $t<m$ and $3\leq k \leq m-1$.\\
\textbf{Proof of $L_1(m,k)$:}\\
The proof of this theorem is similar to that of Theorem \ref{t1}. Consider the following three cases.\\
\textbf{Case 1:} $k$ is odd and $m$ is odd,\\
\textbf{Case 2:} $k$ is odd and $m$ is even,\\
\textbf{Case 3:} $k$ is even.\\

\noindent\textit{Proof of \textbf{Case 1:}} Since each $(k-1)$-element subset in $L_1(m,k)$ has degree $m-k+1$ and the $k$-element subset in $L_1(m,k)$ has degree $k$, the \textbf{case 1} implies that each vertex in $L_1(m,k)$ has odd degree. In this case, \cite{lovas} proves the result.\\

\noindent\textit{Proof of \textbf{Case 2:}} When $k$ is odd and $m$ is even, observe that $m-k+1$ is even. In this case we use a similar technique in the proof of Theorem \ref{t1}. By Lemma \ref{ulg}, $ULG$ of $L_1(m,k)$ is isomorphic to $L_1(m-1,k)$ and according to the induction hypothesis, there exists a path decomposition $\mathcal{P}_2$ of size $\left\lfloor \frac{\binom{m-1}{k-1} + \binom{m-1}{k}}{2} \right\rfloor$. By Lemma \ref{llg}, $LLG$ of $L_1(m,k)$ is isomorphic to $L_1(m-1,k-1)$ and by the induction hypothesis, there exists a path decomposition $\mathcal{P}_1$ of size $\left\lfloor \frac{\binom{m-1}{k-2} + \binom{m-1}{k-1}}{2} \right\rfloor$. Now to handle the ``crossing edges", we use the odd degree property of vertices in $A_1$ of $L_1(m,k)$ (see Figure \ref{fig:plg}). Note that each vertex in $A_1$ has degree $m-k$, which is odd in this case. Then, by odd degree condition, each vertex of $A_1$ is an end vertex of some paths in any path decomposition of $ULG$ of $L_1(m,k)$. Since the number of ``crossing edges" is $|A_1|$ (by Observation \ref{obcr}) we attach each ``crossing edge" to a vertex of $A_1$ in a path where it is the end vertex. This modifies the path decomposition $\mathcal{P}_2$ of $ULG$ to a path decomposition $\mathcal{P'}_2$, containing the `crossing edges" where $|\mathcal{P'}_2| = |\mathcal{P}_2|$. Finally, $\mathcal{P} = \mathcal{P}_1 \cup \mathcal{P'}_2$ is a path decomposition of $L_1(m,k)$ and 
\begin{equation*}
    |\mathcal{P}| \leq \left\lfloor \frac{\binom{m-1}{k-1} + \binom{m-1}{k}}{2} \right\rfloor + \left\lfloor \frac{\binom{m-1}{k-2} + \binom{m-1}{k-1}}{2} \right\rfloor \leq \left\lfloor \frac{\binom{m}{k-1} + \binom{m}{k}}{2} \right\rfloor (\text{by Lemma \ref{pascal}}).
\end{equation*}
This proves Theorem \ref{pm1} in this case.\\

\noindent\textit{Proof of \textbf{Case 3:}} This is the case where $k$ is even. The first part of the proof is similar to the proof of \textbf{ case 2}. By Lemma \ref{ulg}, $ULG$ of $L_1(m,k)$ is isomorphic to $L_1(m-1,k)$ and according to the induction hypothesis, there exists a path decomposition $\mathcal{P}_2$ of size $\left\lfloor \frac{\binom{m-1}{k-1} + \binom{m-1}{k}}{2} \right\rfloor$. By Lemma \ref{llg}, $LLG$ of $L_1(m,k)$ is isomorphic to $L_1(m-1,k-1)$ and according to the induction hypothesis, there exists a path decomposition $\mathcal{P}_1$ of size $\left\lfloor \frac{\binom{m-1}{k-2} + \binom{m-1}{k-1}}{2} \right\rfloor$.

But in this case, to handle the ``crossing edges", we use the odd degree property of vertices in $B_2$ of $L_1(m,k)$ (see Figure \ref{fig:plg}). Note that each vertex in $B_2$ has degree $k-1$ in $LLG$ of $L_1(m,k)$ which is odd in this case. Then by odd degree condition, each vertex of $B_2$ is an end vertex of some paths in any path decomposition of $LLG$ of $L_1(m,k)$. Since the number of ``crossing edges" is $|B_2|$ (by Observation \ref{obcr}) we attach each crossing edge to a vertex of $B_2$ in a path where it is the end degree vertex. This modifies the path decomposition $\mathcal{P}_1$ of $ULG$ to a path decomposition $\mathcal{P'}_1$, containing the crossing edges where $|\mathcal{P'}_1| = |\mathcal{P}_1|$. Finally, $\mathcal{P} = \mathcal{P'}_1 \cup \mathcal{P}_2$ is a path decomposition of $L_1(m,k)$ and 
\begin{equation*}
    |\mathcal{P}| \leq \left\lfloor \frac{\binom{m-1}{k-1} + \binom{m-1}{k}}{2} \right\rfloor + \left\lfloor \frac{\binom{m-1}{k-2} + \binom{m-1}{k-1}}{2} \right\rfloor \leq \left\lfloor \frac{\binom{m}{k-1} + \binom{m}{k}}{2} \right\rfloor (\text{by Lemma \ref{pascal}}).
\end{equation*}
This proves Theorem \ref{pm1} in this case.\\

\end{proof}

\section{Minimum path decomposition} \label{section 4}

In this section, we discuss a minimum path decomposition of Levi graph $L_1(m,2)$. Some authors have used the phrase minimal path decomposition \cite{CKC} instead of the phrase minimum path decomposition. Let us start with formal definition of minimum path decomposition of a graph.

\begin{definition}(Minimum path decomposition):

A minimum path decomposition $\mathcal{P}$ of a graph $G$ is a path decomposition (see Introduction) of $G$ such that for any other path decomposition $\mathcal{P'}$ of $G$ we have $|\mathcal{P}| \leq |\mathcal{P'}|$. The number of paths in a minimum path decomposition of a graph $G$ is known as the path number of $G$ and is denoted by $pn(G)$.

\end{definition}

Many researchers have studied the path number of several families of graphs \cite{CKC, path num}. 
     The path number of a tree is known to be half the number of odd degree vertices in it, and the path number of a cubic graph is half the number of its vertices \cite{path num}.
     The path number of the complete graph $K_n$ is discussed by several researchers \cite{path num, Walecki} and is known to be $\lceil \frac{n}{2} \rceil$.

    In general, identifying a minimum path decomposition of a graph is challenging because one must prove that the obtained number of paths is indeed the minimum possible. A common strategy for establishing lower bounds on the path number involves analyzing the number of vertices in the graph having odd degree or showing that the paths used in the decomposition are of maximum possible length.

    To determine the path number of the Levi graph $L_1(m,2)$, for $m\geq 2$, we use an auxiliary graph isomorphic to the complete graph $K_m$. Since the minimum path decomposition of $K_m$ is already known\cite{Walecki}, this allows us to use the existing results. In the following subsection, we describe a minimum path decomposition of $K_m$, as given in the proof of Proposition 3.1 in \cite{Walecki}. The author describes a path decomposition of the complete graph $K_m$ based on the parity of $m$. When $m$ is even, the decomposition consists of exactly $\frac{m}{2}$ Hamiltonian paths, each of which covers all m vertices. In contrast, when m is odd, the decomposition consists of $\lfloor \frac{m}{2} \rfloor$ Hamiltonian paths, along with an additional path of length $\frac{m-1}{2}$. We adapt ideas from \cite{Walecki} and present a detailed algebraic proof for $pn(K_m)=\lceil \frac{m}{2} \rceil $, which will be useful in our analysis of path number of $L_1(m,2)$.

\subsection{Minimum path decomposition of complete graph $K_m$} \label{subsec4.1}

    Let $\{1,2,\dots,m\}$ be the vertex set of the complete graph $K_m$. By the expression$\pmod m$, we mean the modulo operation with respect to the integer $m$, where $t\pmod m$ represents the reminder of $\frac{t}{m}$. In this discussion, we take $m \pmod m = 0 \pmod m =m$. Consider the following cases: \\
\noindent\textbf{Case 1 ($m$ is even):} 

    Let $\mathcal{P}= \{P_1,P_2,\dots, P_\frac{m}{2}\}$, where each $P_i$ is a sequence of $m$ vertices, $P_i: i, i+1, i-1\pmod m,\dots , i+k \pmod m, i-k+m \pmod m, \dots, i+\frac{m}{2}+1 \pmod m, i+\frac{m}{2} \pmod m$ together with edges between consecutive pairs of vertices in the sequence. In the above sequence, $ i+k \pmod m$ and $i-k+m \pmod m$ are vertices in the positions $2k^{\text{th}}$ and $(2k+1)^{\text{th}}$ respectively, where $0 \le k \le \frac{m}{2}$.  
    
    Precisely, for $0 \le k \le m/2$, a vertex $v$ in the path $P_i$ is defined as:

\begin{equation} 
    v=
    \begin{cases}
        i+k \;\;\;\;\;\;\;\ \pmod m & \text{if } v \text{ is in } 2k^{th}\text{ position}\\
        i-k+m \pmod m & \text{if } v \text{ is in } (2k+1)^{th} \text{ position}.
    \end{cases}
    \label{v}
\end{equation}

We understand the positioning of the vertices in $P_i$ as follows: Each $P_i$ begins with the vertex $i$ in the first position. Then, the vertices $i+1, i+2, \dots i+\frac{m}{2}$ are placed in even positions $2, 4, \dots m $, respectively. Now, we place $i + \frac{m}{2} +1 \pmod m , i+\frac{m}{2} +2 \pmod m, \dots i+\frac{m}{2} + \frac{m}{2}-1 \pmod m$ in the remaining odd positions of $P_i$,  starting from position $(m-1)$ down to $3$, in descending order. 
For example, $P_2$ in $L_1(6,2)$ is $P_2: 2, 3, 1, 4, 6, 5$.

Observe that since $1 \le i \le m/2$ and all additions are taken with modulo $m$, no two vertices in any given $P_i$ receive the same value. Therefore, all $m$ vertices in $P_i$ are distinct. This implies that $P_i$ is a valid path in $K_m$, and, in particular, it forms a Hamiltonian path in $K_m$.

Furthermore, observe that each Hamiltonian path in $K_m$ consists of exactly $m - 1$ edges. With a total of $m/2$ such edge-disjoint Hamiltonian paths, the combined edge count is:
\[
\frac{m}{2} \cdot (m - 1) = \frac{(m - 1) \cdot m}{2},
\]
which equals the total number of edges in the complete graph $K_m$.

Since no path in a $m$-vertex graph can contain more than $m - 1$ edges, a Hamiltonian path decomposition is indeed a minimum path decomposition of $K_m$.

In summary, if we can prove that the paths $P_i$  for $i \in [m/2]$ are edge-disjoint, we obtain a minimum path decomposition of $K_m$. The following claim addresses this remaining part.

\begin{claim} \label{pipj}
 For $i\neq j$, paths $P_i$ and $P_j$ are edge disjoint.
\end{claim}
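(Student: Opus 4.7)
The plan is to tag every edge of $P_i$ with an invariant that depends only on $i$ modulo $m$, namely the sum of its two endpoint labels reduced modulo $m$. Once such an invariant is in place, showing edge-disjointness will reduce to a short parity and range argument that crucially uses the evenness of $m$ and the restriction $1 \le i \le m/2$.

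The first step is to use equation (\ref{v}) to sort the $m-1$ edges of $P_i$ into two classes according to the parities of the positions of their endpoints. An edge joining position $2k-1$ to position $2k$ will connect a vertex of the form $i-(k-1)$ to one of the form $i+k$, so its endpoint sum reduces to $2i+1 \pmod{m}$. An edge joining position $2k$ to position $2k+1$ connects $i+k$ to $i-k$, giving endpoint sum $2i \pmod{m}$. Thus every edge of $P_i$ will carry an invariant lying in the set $\{2i,\, 2i+1\} \pmod{m}$.

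Next, I would suppose that $P_i$ and $P_j$ share an edge $e$ with $i \neq j$, and let $s$ denote its endpoint sum modulo $m$. From the previous step, $s \in \{2i, 2i+1\}$ and $s \in \{2j, 2j+1\}$ modulo $m$. Since $m$ is even, reduction modulo $m$ preserves parity, which immediately rules out the mixed cases $2i \equiv 2j+1$ and $2j \equiv 2i+1$. Hence $2i \equiv 2j \pmod{m}$, and since $1 \le i, j \le m/2$ forces $2i, 2j \in \{2, 4, \ldots, m\}$, the congruence can only be the equality $2i = 2j$, i.e.\ $i = j$, contradicting the hypothesis.

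The part I expect to require the most care is the first step: verifying directly from (\ref{v}) that no edge escapes the $\{2i, 2i+1\}$ classification, and handling the modular arithmetic at the boundary of the sequence (in particular the edges incident to positions $1$ and $m$) without off-by-one errors. After that, the conclusion will be essentially a one-line parity and range argument, so the whole proof should remain compact once the bookkeeping is nailed down.
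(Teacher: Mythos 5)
Your proof is correct, and it takes a genuinely different route from the paper's. You attach to each edge the symmetric invariant ``sum of the two endpoint labels modulo $m$'' and check from \eqref{v} that every edge of $P_i$ carries a value in $\{2i, 2i+1\} \pmod m$ (edges from an odd position to the following even position give $2i+1$, edges from an even position to the following odd position give $2i$); a shared edge then forces $2i \equiv 2j \pmod m$ by parity (using that $m$ is even), and the range $1 \le i,j \le m/2$ upgrades the congruence to $i=j$. The paper instead performs an explicit case analysis: it distinguishes whether the shared edge appears in $P_j$ in the same or reversed orientation, crosses that with the parities of the positions of the relevant endpoints, and works through eight subcases (Tables \ref{u=u'} and \ref{u=v'}), each yielding $i=j$ or $2i=2j+1$. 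Your invariant is symmetric in the two endpoints, so it absorbs the orientation distinction for free and collapses all eight subcases into a single parity-plus-range argument; this is more compact and less prone to bookkeeping errors. What the paper's version buys is that each subcase is a bare mechanical computation requiring no insight to verify, whereas your argument asks the reader to confirm once that the invariant classification is exhaustive, including the boundary edges at positions $1$ and $m$ --- which, as you anticipated, is the only place where care is needed, and which does check out.
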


\begin{proof}
Consider an edge $(u,v) \in E(P_i)$, then from \eqref{v} it has one of the two following forms depending on the parity of the position of the vertex $u$.
\begin{equation} \label{epi}
    (u,v)=
    \begin{cases}
        \Big(i+k \pmod m,  i-k+m  \pmod m\Big)  &\text{if } u \text{ is in } 2k^{th}\text{ position}\\
        \text{or} \\
        \Big(i-k+m \pmod m, i+k+1  \pmod m \Big) &\text{if } u \text{ is in } (2k+1)^{\text{th}}\text{ position.}
        
    \end{cases}  
\end{equation}

Now, on contrary,  suppose that there are two paths, say $P_i$ and $P_j$, in the path decomposition $\mathcal{P}$  sharing a common edge, say $(u,v)$. Let us denote the edge of $P_i$ as $(u,v)$ and the same edge in $P_j$ as $(u',v')$. Then the following two cases arise:
case 1: $u =u' \& v = v'$, case 2: $u =v' \& v = u'$ ( for example, if an edge $(u,v) = (1,2)$ in $P_i$ is appearing in another path $P_j$, then it can come in the order $(1,2)$ or $(2,1)$ in $P_j$).  These cases, along with the odd/even position of vertex $u$ give rise to the following sub cases: \\

\begin{table}[H]
    \centering
    \begin{tabular}{|p{3.4cm}|p{3.4cm}|}
    \hline
        Case I ($u=u'$, $v=v'$)  & Case II ($u=v'$, $v=u'$) \\ \hline
        $u$ is in even position and $u'$ is in even position & $u$ is in even position and $v'$ is in even position \\ \hline

        $u$ is in even position and $u'$ is in odd position & $u$ is in even position and $v'$ is in odd position \\ \hline

        $u$ is in odd position and $u'$ is in even position & $u$ is in odd position and $v'$ is in even position \\ \hline

        $u$ is in odd position and $u'$ is in odd position & $u$ is in odd position and $v'$ is in odd position \\ \hline
    \end{tabular}
    \caption{Subcases}
    \label{subcases}
\end{table}

Table \ref{subcases} shows all possible subcases of Case I and Case II depending on the parity of the position of $u$, $u'$ and $v'$. Tables \ref{u=u'} and \ref{u=v'} show the implications of each of the subcases mentioned in Table \ref{subcases}. For example, the subcase $u$ is in even position and $u'$ is in even position is given in the first cell of Table \ref{subcases} (along with $u=u'$, $v=v'$), implies that $i+k \pmod m=j+t \pmod m$ and $i-k+m \pmod m=j-t+m \pmod m$ when $u$ is in $2k$th position and $u'$ is in $2t$th position. For $0 \leq k,t \leq \frac{m}{2}$,

\begin{table}[H]
    \centering
    \begin{tabular}{|c|c|c|}
    \hline
       & $u'$ is in $2t$th position &  $u'$ is in $(2t+1)$th position \\\hline
      $u$ is in $2k$th position  & \begin{tabular}{@{}c@{}}$i+k \pmod m=j+t \pmod m$ \& \\ $i-k+m \pmod m=j-t+m \pmod m$ \end{tabular} & \begin{tabular}{@{}c@{}}$i+k \pmod m =j-t+m \pmod m$  \& \\ $i-k+m \pmod m =j+t+1\pmod m$\end{tabular} \\ \hline
      
      $u$ is in $(2k+1)$th position  & \begin{tabular}{@{}c@{}}$i-k+m\pmod m=j+t\pmod m$ \& \\ $i+k+1\pmod m=j-t+m\pmod m$ \end{tabular} & \begin{tabular}{@{}c@{}}$i-k+m\pmod m=j-t+m\pmod m$ \& \\ $i+k+1\pmod m=j+t+1\pmod m$ \end{tabular} \\ \hline
      
   \end{tabular}
    \caption{$u=u'$ and $v=v'$}
    \label{u=u'}
\end{table}

\begin{table}[H]
    \centering
    \begin{tabular}{|c|c|c|}
    \hline
       & $v'$ is in $2t$th position &  $v'$ is in $(2t+1)$th position \\\hline

      $u$ is in $2k$th position  & \begin{tabular}{@{}c@{}}$i+k\pmod m=j+t\pmod m$ \& \\ $i-k+m\pmod m=j-(t-1)+m\pmod m$\end{tabular} & \begin{tabular}{@{}c@{}}$i+k\pmod m=j-t+m\pmod m$ \& \\ $i-k+m\pmod m=j+t\pmod m$\end{tabular} \\ \hline

       \begin{tabular}{@{}c@{}}$u$ is in $(2k+1)$th \\ position\end{tabular} & \begin{tabular}{@{}c@{}}$i-k+m\pmod m=j+t\pmod m$ \& \\ $i+k+1\pmod m=j-(t-1)+m\pmod m$\end{tabular} & \begin{tabular}{@{}c@{}}$i-k+m\pmod m=j-t+m\pmod m$ \& \\ $i+k+1\pmod m=j+t\pmod m$ \end{tabular} \\ \hline

    \end{tabular}
    \caption{$u=v'$ and $v=u'$}
    \label{u=v'}
\end{table}

If we consider the first cell of Table \ref{u=u'}, that is, $ i+k \pmod m= j+t \pmod m \text{ and } i-k+m \pmod m=j-t+m \pmod m$, we get a contradiction $i=j$. Similarly, if we consider the second cell of Table \ref{u=u'}, that is, $i+k \pmod m= j-t+m \pmod m \text{ and } i-k+m \pmod m=j+t+1 \pmod m$, then we get $2i=2j+1$ as a contradiction. In a similar way, in all other possibilities given in Table \ref{u=u'} and Table \ref{u=v'}, we arrive at either $i=j$ or $2i=2j+1$ (a contradiction). Therefore the paths are edge disjoint and hence $\mathcal{P}$ is a minimum path decomposition of the graph.\\

\end{proof}

\noindent\textbf{Case 2 ($m$ is odd):} In this case, since $m-1$ is an even number, let $\mathcal{P}$ be the minimum path decomposition of $K_{m-1}$ as discussed in Case 1. Now, we modify $\mathcal{P}$ to get a minimum path decomposition of $K_m$. Observe that the path decomposition $\mathcal{P}$ of $K_{m-1}$ contains all the edges of $K_m$ other than the edges incident on the vertex $m$. We modify $\mathcal{P}$ to a minimum path decomposition $\mathcal{P^*}$ of graph $K_m$ as follows:\\
For $1 \le i \le  \frac{m-1}{2}$,
\begin{enumerate}
    \item Let $P'_i$ be a cycle in $K_m$ obtained from $P_i$ by adding the edges $(i,m)$ and $(i+\frac{m-1}{2},m)$.
    \item Remove edge $(i,i+1)$ from $P'_i$ to get a path $P^*_i$ in $K_m$ having the end vertices $i$ and $i+1$. 
    \item Let $P^*_{\frac{m+1}{2}}$ be a path in $K_m$ formed using these deleted edges. That is,  $$P^*_{\frac{m+1}{2}}:1,2,3, \dots, i, i+1, \dots, \frac{m-1}{2}, \frac{m-1}{2}+1.$$
\end{enumerate}

Then, observe that $\mathcal{P^*}= \{P^*_1, P^*_2, \dots, P^*_{\frac{m-1}{2}}, P^*_{\frac{m+1}{2}} \}$ is a path decomposition of $K_m$. Now we prove that $P^*$ is a minimum path decomposition of the complete graph $K_m$ in this case.

\begin{claim}
    $\mathcal{P^*}$ is a minimum path decomposition of $K_m$.
\end{claim}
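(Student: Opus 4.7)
The plan is to establish two things: first, that $\mathcal{P}^*$ as constructed is a valid path decomposition of $K_m$ with exactly $\frac{m+1}{2}$ paths; and second, that no decomposition into fewer paths exists. Together these yield $pn(K_m) = \lceil m/2 \rceil = \frac{m+1}{2}$ when $m$ is odd, so $\mathcal{P}^*$ attains the minimum.

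First I would verify that each $P^*_i$ is actually a path in $K_m$. Applying Case 1 to $K_{m-1}$ (which has an even number of vertices), the Hamiltonian path $P_i$ starts at $i$ in position $1$ and ends in position $m-1$; since $m-1$ is even, the formula \eqref{v} with $2k = m-1$ gives the last vertex as $i + \tfrac{m-1}{2} \pmod{m-1}$. Adjoining the two edges $(i,m)$ and $(i + \tfrac{m-1}{2}, m)$ therefore closes $P_i$ into a Hamiltonian cycle $P'_i$ on $[m]$. The edge $(i, i+1)$ is the first edge of $P_i$ and hence lies on $P'_i$, so $P^*_i = P'_i - (i, i+1)$ is a Hamiltonian path on $m$ vertices with endpoints $i$ and $i+1$. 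The remaining $P^*_{(m+1)/2}$ is the straight path $1, 2, \dots, \tfrac{m+1}{2}$ in $K_m$.

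Next I would check edge-disjointness and complete coverage. For edges not incident to $m$: by Case 1, $\{P_1, \dots, P_{(m-1)/2}\}$ partitions $E(K_{m-1})$. The passage from $P_i$ to $P^*_i$ deletes exactly the edge $(i, i+1)$; these $(m-1)/2$ deleted edges are precisely those of $P^*_{(m+1)/2}$, and each appears in a unique $P_i$ since $P_i$ is the only path whose first edge is $(i, i+1)$. For edges incident to $m$: $P^*_i$ contributes $(i, m)$ and $(i + \tfrac{m-1}{2}, m)$, and as $i$ ranges over $[\tfrac{m-1}{2}]$ the set $\{i, i + \tfrac{m-1}{2}\}$ enumerates each element of $[m-1]$ exactly once, so every one of the $m-1$ edges at $m$ appears in exactly one $P^*_i$. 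Hence $\mathcal{P}^*$ is a bona fide decomposition of cardinality $\frac{m-1}{2} + 1 = \frac{m+1}{2}$.

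The lower bound is an elementary counting argument: $K_m$ has $\binom{m}{2} = \frac{m(m-1)}{2}$ edges, and every path on $m$ vertices uses at most $m-1$ edges, so any path decomposition requires at least $\frac{m(m-1)/2}{m-1} = \frac{m}{2}$ paths. Since $m$ is odd, this forces at least $\lceil m/2 \rceil = \frac{m+1}{2}$ paths, matching the construction. The main obstacle is purely bookkeeping, namely keeping track of which edges are added and removed in passing from $\{P_i\}$ to $\{P^*_i\}$ and confirming that the one-time removal of each $(i, i+1)$ is exactly compensated by $P^*_{(m+1)/2}$; there is no deeper difficulty.
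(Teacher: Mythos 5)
Your proof is correct, and its minimality argument --- that $\frac{m-1}{2}$ paths, each of length at most $m-1$, cover at most $\frac{(m-1)^2}{2} < \binom{m}{2}$ edges --- is exactly the counting argument the paper uses for this claim. Your additional verification that $\mathcal{P}^*$ is a valid decomposition (each $P^*_i$ is a Hamiltonian path with endpoints $i$ and $i+1$, the deleted edges $(i,i+1)$ are precisely the edges of $P^*_{(m+1)/2}$, and the pairs $\{i, i+\frac{m-1}{2}\}$ partition $[m-1]$ so each edge at $m$ is used once) is something the paper asserts only with an ``observe that,'' so your write-up is if anything more complete.
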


\begin{proof}

Let $\mathcal{P^{**}}$ be another path decomposition of $K_m$ with $|\mathcal{P^{**}}| < |\mathcal{P^*}|= \frac{m+1}{2}$. Then $\mathcal{P^{**}}$ can contain at most $\frac{m-1}{2}$ paths from $K_m$. Since the maximum length of a path in $K_m$ is $m-1$, observe that $P^{**}$ covers at most $\frac{(m-1)(m-1)}{2}$ edges, which is strictly less than the number of edges of $K_m$. This contradicts the assumption that $\mathcal{P^{**}}$ is a path decomposition of $K_m$. Hence $\mathcal{P^*}$ is a minimum path decomposition of $K_m$.

\end{proof}

From \textbf{Case 1} and \textbf{Case 2}, we conclude that the path number of $K_m$ is $\lceil \frac{m}{2} \rceil$.

\subsection{Minimum path decomposition of $L_{1}(m,2)$}

To construct a minimum path decomposition of $L_1(m,2)$, we use the minimum path decomposition of $K_m$ discussed in Section \ref{subsec4.1}. 

\begin{observation} \label{subd} \textbf{(Subdivision of a path $P_i$ in $K_m$ to get a path $P'_i$ of $L_1(m,2)$):}\\

Replace each edge $(u,v) \in E(P_i)$ with a path $u, \{u,v\}, v$ in $L_1(m,2)$ to get a path $P'_i$ in $L_1(m,2)$. For example, $P_1: 1, 2, 6, 3, 5, 4$ is a path in $K_6$ and $P'_1: 1,\{1,2\}, 2,\{2,6\}, 6,\{6,3\}, 3,\{3,5\}, 5,\{5,4\}, 4$ is a subdivision of $P_1$ which is a path in $L_1(m,2)$.
\end{observation}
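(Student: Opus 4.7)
The plan is to verify that the subdivision construction described in Observation \ref{subd} yields a well-defined path in $L_1(m,2)$, which amounts to checking the two defining features of a path: (i) consecutive vertices are adjacent in $L_1(m,2)$, and (ii) all vertices listed are distinct. First I would write the subdivided sequence explicitly: if $P_i: v_1, v_2, \ldots, v_t$ is a path in $K_m$, then the claimed path is
\[
P'_i: v_1, \{v_1,v_2\}, v_2, \{v_2,v_3\}, v_3, \ldots, v_{t-1}, \{v_{t-1},v_t\}, v_t.
\]
Since consecutive vertices in the original path $P_i$ give rise, in $P'_i$, to an alternating sequence of $1$-element subsets in $A$ and $2$-element subsets in $B$, the sequence is at least consistent with the bipartite structure of $L_1(m,2)$.

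Next I would verify adjacency. Every consecutive pair in $P'_i$ is either of the form $\bigl(v_j,\{v_j,v_{j+1}\}\bigr)$ or of the form $\bigl(\{v_j,v_{j+1}\},v_{j+1}\bigr)$, and in both cases the $1$-element subset is properly contained in the $2$-element subset. By the definition of $L_1(m,2)$, this proper containment is exactly the adjacency relation, so each consecutive pair in $P'_i$ is a genuine edge of $L_1(m,2)$.

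Finally I would verify distinctness. The singletons $v_1,\ldots,v_t$ are pairwise distinct because $P_i$ is a path in $K_m$ and thus visits each vertex at most once. The $2$-element subsets $\{v_j,v_{j+1}\}$ for $1\le j\le t-1$ are pairwise distinct because they are precisely the edges of $P_i$ in $K_m$, and a path in a simple graph uses each of its edges once; since $K_m$ is simple, distinct edges correspond to distinct $2$-element subsets. No singleton can coincide with any $2$-element subset, as the former lie in $A$ and the latter in $B$ of the bipartition of $L_1(m,2)$. Since there is no real obstacle here and the observation is essentially a direct check from the definitions, the main point worth recording is the passage from ``distinct edges of $P_i$'' to ``distinct pairs in $B$'', which is what guarantees that $P'_i$ is a path and not merely a walk.
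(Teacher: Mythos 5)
Your verification is correct and follows the only natural route: the paper states this as an observation without a formal proof, treating the adjacency check (proper containment) and the distinctness of the subdivision vertices as immediate from the definitions, which is exactly what you spell out. Your explicit remark that distinct edges of $P_i$ give distinct $2$-element subsets in $B$ is the one point of substance, and it is correct.
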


\begin{claim} \label{c3}
    Let $\mathcal{P}= \{P_1,P_2,\dots,P_t\}$ be a path decomposition in $K_m$. Then $\mathcal{P'}= \{P'_1,P'_2,\dots,P'_t\}$ obtained by subdividing paths in $\mathcal{P}$ (see Observation \ref{subd}) is a path decomposition of $L_1(m,2)$. 
\end{claim}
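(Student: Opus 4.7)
The plan is to exploit the fact that $L_1(m,2)$ is essentially the \emph{edge-subdivision} of $K_m$: each vertex $\{a,b\} \in B$ corresponds to the edge $\{a,b\}$ of $K_m$, and the two edges of $L_1(m,2)$ incident to $\{a,b\}$, namely $(a,\{a,b\})$ and $(b,\{a,b\})$, are exactly the two ``halves'' of the edge $\{a,b\}$ of $K_m$ after subdivision. The claim then reduces to the routine fact that subdividing edges of a graph carries any path decomposition over to a path decomposition of the subdivision, and I would prove it in two steps: (i) each $P'_i$ really is a path in $L_1(m,2)$, and (ii) the $E(P'_i)$ partition $E(L_1(m,2))$.

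For step (i), fix $P_i : v_1, v_2, \dots, v_s$ in $K_m$. By the subdivision rule in Observation \ref{subd}, the sequence defining $P'_i$ is
\begin{equation*}
P'_i : v_1, \{v_1,v_2\}, v_2, \{v_2,v_3\}, v_3, \dots, v_{s-1}, \{v_{s-1}, v_s\}, v_s.
\end{equation*}
Consecutive vertices in this sequence alternate between $A$ and $B$, and each such consecutive pair $(v_j, \{v_j, v_{j+1}\})$ or $(\{v_j, v_{j+1}\}, v_{j+1})$ is a genuine edge of $L_1(m,2)$ by the definition of the Levi graph. It remains to verify that all vertices in the sequence are distinct. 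The vertices in $A$ appearing in $P'_i$ are exactly $v_1, \dots, v_s$, which are distinct because $P_i$ is a path in $K_m$. The vertices in $B$ appearing in $P'_i$ are exactly the edges $\{v_1,v_2\}, \{v_2,v_3\}, \dots, \{v_{s-1},v_s\}$ of $P_i$, which are distinct because the edges of a path are distinct. Since $A$ and $B$ are disjoint in the bipartition, no $A$-vertex equals any $B$-vertex, so all vertices of $P'_i$ are distinct and $P'_i$ is a valid path.

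For step (ii), I would establish a two-to-one correspondence between $E(L_1(m,2))$ and $E(K_m)$: every edge of $L_1(m,2)$ has the form $(a, \{a,b\})$ for a unique pair $(a,b)$ with $a \ne b$, and it corresponds to the edge $\{a,b\} \in E(K_m)$; conversely, each edge $\{a,b\} \in E(K_m)$ gives rise to exactly the two edges $(a,\{a,b\})$ and $(b,\{a,b\})$ in $L_1(m,2)$. Now fix any edge $e' = (a, \{a,b\}) \in E(L_1(m,2))$. Since $\mathcal{P}$ is a path decomposition of $K_m$, the edge $\{a,b\}$ lies in a unique path $P_k$. In $P_k$, the vertices $a$ and $b$ appear consecutively, so Observation \ref{subd} inserts the vertex $\{a,b\}$ between them in $P'_k$; in particular both edges $(a,\{a,b\})$ and $(b,\{a,b\})$ appear in $E(P'_k)$, and by construction they appear in no other $P'_j$ (since $\{a,b\}$ is an edge of no other $P_j$). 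Hence $e' \in E(P'_k)$ for exactly one index $k$, which shows $\bigcup_i E(P'_i) = E(L_1(m,2))$ and $E(P'_i) \cap E(P'_j) = \emptyset$ for $i \neq j$.

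The main obstacle, such as it is, is the bookkeeping in step (ii) to confirm that the subdivision operation produces exactly the right two edges for every edge of $K_m$ and nothing more; once the bijection between $E(K_m)$ and pairs of adjacent edges in $L_1(m,2)$ is stated cleanly, both the covering property and the edge-disjointness follow immediately from the corresponding properties of $\mathcal{P}$ in $K_m$. Step (i) is essentially a formality once one notes that subdivision preserves paths.
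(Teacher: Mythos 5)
Your proof is correct, and the edge-disjointness half is the same argument as the paper's: a shared edge of the form $(u,\{u,v\})$ in two subdivided paths would force the edge $\{u,v\}$ into two distinct paths of $\mathcal{P}$, contradicting that $\mathcal{P}$ is a decomposition of $K_m$. Where you genuinely diverge is in the covering half. The paper verifies coverage by an edge count: it observes that subdivision doubles the length of each path and that $\frac{m}{2}$ paths of length $2(m-1)$ account for all $m(m-1)$ edges of $L_1(m,2)$. That count silently assumes $\mathcal{P}$ is the specific decomposition of $K_m$ into $\frac{m}{2}$ Hamiltonian paths from Section 4.1, even though the claim is stated for an arbitrary path decomposition $\mathcal{P}$ of $K_m$. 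Your two-to-one correspondence between $E(L_1(m,2))$ and $E(K_m)$ proves coverage (and disjointness in one stroke) for any $\mathcal{P}$, so your argument actually establishes the claim at the generality at which it is stated, and it is what is needed in the odd-$m$ case of Theorem \ref{m2}, where the decomposition of $K_m$ being subdivided is not into $\frac{m}{2}$ Hamiltonian paths. Your explicit check in step (i) that each $P'_i$ is a genuine path (distinct vertices, valid Levi-graph edges) is also a detail the paper takes for granted. No gaps.
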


\begin{proof}
   Let $P'_i$ and $P'_j$ be two paths in $\mathcal{P'}$ that share a common edge $g_1$. Since any edge in $L_1(m,2)$ has the form $(u,\{u,v\})$ or $(\{u,v\},v)$, without loss of generality, we take $g_1=(u,\{u,v\})$. As $P'_i$ and $P'_j$ are obtained by subdividing $P_i$ and $P_j$ of $K_m$, note that the above assumption implies that the edge $(u,v)$ is present in both $P_i$ and $P_j$ of $K_m$. This contradicts $\mathcal{P}$ is a path decomposition of $K_m$. Hence $P'_i$ and $P'_j$ are distinct for any $i\neq j$. Observe that after subdividing each edge $(u,v)$ in path $P_i \in \mathcal{P}$ to get the path $P'_i \in \mathcal{P'}$, the length of $P'_i$ becomes $2$(length of $P_i$). Since $\mathcal{P'}$ contains $\frac{m}{2}$ edge disjoint paths, all of these paths together cover $\frac{m}{2}(2m-2) = m(m-1) = |E(L_1(m,2)|$ edges. Hence $\mathcal{P'}$ is a path decomposition of $L_1(m,2)$. 
   
\end{proof}

\begin{claim} \label{c4}
    If $\mathcal{D}$ is a path decomposition of $L_1(m,2)$, then $|\mathcal{D}| \geq \lfloor \frac{m}{2} \rfloor$. 
\end{claim}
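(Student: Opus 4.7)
The strategy is an edge-counting argument combined with a simple upper bound on the length of any path in the bipartite graph $L_1(m,2)$.

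First I recall that every vertex in $B$ has degree $2$, so $|E(L_1(m,2))| = 2\binom{m}{2} = m(m-1)$. Next, since $L_1(m,2)$ is bipartite with parts $A$ (of size $m$) and $B$ (of size $\binom{m}{2}$), any path in $L_1(m,2)$ alternates between $A$ and $B$. Hence, if a path $P$ contains $a$ vertices of $A$ and $b$ vertices of $B$, then $|a-b|\leq 1$, and combined with $a\leq m$ this yields that $P$ has at most $2m+1$ vertices, so $|E(P)|\leq 2m$.

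Given any path decomposition $\mathcal{D}=\{P_1,\dots,P_t\}$ of $L_1(m,2)$, summing the above bound over the paths gives
\[
m(m-1) \;=\; \sum_{i=1}^{t} |E(P_i)| \;\leq\; 2m\cdot t,
\]
so $t \geq (m-1)/2$. Since $t$ is a positive integer, a small parity check (for $m$ even, $\lceil(m-1)/2\rceil = m/2$; for $m$ odd, $(m-1)/2$ is already an integer equal to $\lfloor m/2 \rfloor$) yields $t\geq \lceil(m-1)/2\rceil = \lfloor m/2\rfloor$, which is the desired bound.

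The main (very mild) obstacle is justifying the length cap $|E(P)|\leq 2m$, for which bipartite alternation together with $|A|=m$ suffices. In the particular case when $m$ is even one could alternatively invoke the odd degree condition: each of the $m$ vertices of $A$ has odd degree $m-1$ and must therefore be an endpoint of some path, forcing $|\mathcal{D}|\geq m/2$. However, this alternative argument is useless when $m$ is odd (all degrees are even there), so the edge-counting route is preferable as a uniform proof covering both parities of $m$.
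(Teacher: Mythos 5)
Your proof is correct and takes essentially the same route as the paper: both arguments bound the length of any path in $L_1(m,2)$ by $2m$ edges using the bipartition with $|A|=m$, and then compare against the total edge count $m(m-1)$. The only difference is presentational --- you divide directly and finish with an integrality check, whereas the paper phrases the same count as a proof by contradiction starting from $|\mathcal{D}|\leq \lfloor \frac{m}{2}\rfloor - 1$.
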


\begin{proof}
    We prove this claim by contradiction. Suppose that the path number of $L_1(m,2)$ is less than $\lfloor \frac{m}{2} \rfloor$. Then, there is a path decomposition of size at most $ \lfloor \frac{m}{2} \rfloor-1$. Note that in a bipartite graph with vertex partition $(A,B)$, where $|A|=m_1$ and $|B|=m_2$, the maximum length path that we are able to take is $2\times min \{m_1,m_2\}$. In $L_1(m,2)$, except for the case $m=2$, we have $m_1 \leq m_2$, where $m_1=m$ and $m_2=\frac{m(m-1)}{2}$. Therefore, the length of the path is at most $2\times min \{m_1,m_2\}= 2m$. Hence, we can use at most $m(m-2)$ edges,  which is less than the total number of edges of $L_1(m,2)$. This means that there are $m$ edges which are still uncovered. Hence, the size of a path decomposition of $K_m$ should be greater than $ \lfloor \frac{m}{2} \rfloor-1$. For the case $m=2$, it is obvious. 
\end{proof}
Now, we prove Theorem \ref{m2} which discusses the path number of $L_1(m,2)$, for $m \geq 2$. 
\begin{proof}[{\bf Proof of  Theorem~\ref{m2}}]
Consider the following cases:\\
\noindent\textbf{Case 1 ($m$ is even):} In this case, we use the path decomposition $\mathcal{P}$ of $K_m$ given in section \ref{subsec4.1} (Case 1) to find a minimum path decomposition of $L_1(m,2)$. We subdivide each path in $\mathcal{P}$ to get a path decomposition $\mathcal{P'}$ of $L_1(m,2)$ as given in  Observation \ref{subd}. Then by Claim \ref{c3} and Claim \ref{c4} the path decomposition $\mathcal{P'}$ is a minimum path decomposition of $L_1(m,2)$.\\

\noindent\textbf{Case 2 ($m$ is odd):} In this case, we use the same subdividing technique as in case 1.  When $m$ is odd, observe that the path decomposition $\mathcal{P}$ of $K_m$ (see section \ref{subsec4.1}, Case 2) contains $\frac{m+1}{2}$ paths where $\frac{m-1}{2}$ are Hamiltonian paths and one non-Hamiltonian path of $K_m$. Similar to Case 1, we subdivide each edge $(u,v)$ to a path $u, \{u,v\}, v$ to get a path $P^*_i$ of $L_1(m,2)$ and let $\mathcal{P^*}= \{P^*_1,P^*_2,\dots,P^*_\frac{m+1}{2}\}$. After subdividing, the length of each path $P^*_i$ for $1\leq i\leq \frac{m-1}{2}$ becomes $2(m-1)$ and the length of $P^*_\frac{m+1}{2}$ becomes $(m-1)$. Now, we remove $P^*_\frac{m+1}{2}$ from $\mathcal{P^*}$ by replacing the edges of the path $P^*_{\frac{m+1}{2}}:1,\{1,2\}, 2,\{2,3\},3,\dots,\frac{m-1}{2},\{\frac{m-1}{2},\frac{m+1}{2}\},\frac{m+1}{2}$ to specific paths  $P^*_i$ for $1\leq i \leq \frac{m-1}{2}$. Observe that for $1\leq i \leq \frac{m-1}{2}$, the vertices $i$ and $i+1$ are the end vertices of $P^*_i$. We attach the edges of the path $P^*_{\frac{m+1}{2}}$ in the following way.\\

\textbf{When $\frac{m-1}{2}$ is even:}\\
For $1 \le i \le \frac{m-1}{2}$, perform the following actions to get path $P_i'$ from the path $P_i^* \in \mathcal{P^*}$:

\textit{When $i$ is odd}: Remove the edges $(i, \{i, i+1\})$ and $(i+1, \{i+1, i+2\})$ from $P^*_\frac{m+1}{2}$ then attach $(i, \{i, i+1\})$ to the vertex $i$ and $(i+1, \{i+1, i+2\})$ to the vertex $i+1$ in $P_i^*$.\\

\textit{When $i$ is even}: Remove the edges $(i, \{i, i-1\})$ and $(i+1, \{i+1, i\})$ from $P^*_\frac{m+1}{2}$ then attach $(i, \{i, i-1\})$ to the vertex $i$ and $(i+1, \{i+1, i\})$ to the vertex $i+1$ in $P_i^*$.\\

\textbf{When $\frac{m-1}{2}$ is odd:}\\
Except for the last three paths, that is, for $1\leq i \leq \frac{m-7}{2}$, we use the same rule mentioned in the above `$\frac{m-1}{2}$ is even' case to find the path $P'_i$ from $P_i^*$. In the path $P^*_\frac{m-5}{2}$, we attach the edge $(\frac{m-5}{2},\{\frac{m-5}{2}, \frac{m-3}{2}\})$ to the vertex $\frac{m-5}{2}$ and the edge $(\{\frac{m-3}{2}, \frac{m-1}{2}\}, \frac{m-3}{2})$ to the vertex $\frac{m-3}{2}$ to form the path $P'_\frac{m-5}{2}$. Now attach the edge $(\frac{m-3}{2},\{\frac{m-5}{2}, \frac{m-3}{2}\})$ to the vertex $\frac{m-3}{2}$ and the edge $(\{\frac{m-1}{2}, \frac{m+1}{2}\}, \frac{m-1}{2})$  to the vertex $\frac{m-1}{2}$ of  the path $P^*_\frac{m-3}{2}$  to form the path $P'_\frac{m-3}{2}$. In the final path $P^*_\frac{m-1}{2}$, we can attach the edge $(\frac{m-1}{2},\{\frac{m-3}{2}, \frac{m-1}{2}\})$ to the vertex $\frac{m-1}{2}$ and the edge $(\{\frac{m-1}{2}, \frac{m+1}{2}\}, \frac{m+1}{2})$ to the vertex $\frac{m+1}{2}$ to form the path $P'_\frac{m-1}{2}$.

Observe that in both cases for each $i$, we have removed two edges from the path $P^*_{\frac{m+1}{2}}$ and attached to $P_i^*$ to get the path $P_i'$. Hence the $\frac{m-1}{2}$ new paths together accommodate all edges of the path $P_{\frac{m+1}{2}}$. This implies that there is a path decomposition $\mathcal{P}' = \{P_1', P_2' \dots, P'_{\frac{m-1}{2}}\}$ of size $\lfloor \frac{m}{2} \rfloor$ when $m$ is odd.

\end{proof}

\section{Conclusion}
In this work, we have proved Gallai's conjecture for Levi graph of order one $L_{1}(m,k)$ for all $ m \ge 2 $ and for all $2 \le k \le m$. We have also developed a step-by-step method to determine the path number of $L_{1}(m,2)$. For $1 < t < k$, the graph $L_{t}(m,k)$ is a bipartite graph where one part contains all the $(k-t)$-elements subsets and other part contains all the $k$-elements subsets of $[m]= \{1, 2, \dots, m\}$. A vertex representing $(k-t)$-elements subsets is adjacent to a vertex representing $k$-elements subsets if and only if the $(k-t)$-elements subset is properly contained in the $k$-elements subsets. In this discussion, we have focused solely on the Levi graph of order one. However, this work can be extended to the Levi graph of order $t$, denoted by $L_{t}(m,k)$.

\noindent To prove Gallai's conjecture for the Levi Graph of order one, we utilize the degree condition: if the degree of a vertex is odd, then the vertex is an end vertex of some path. Because of this degree condition, we reintroduce the crossing edges to that vertex. In this scenario, the set of crossing edges forms a matching, which simplifies the process. However, for $L_{t}(m,k)$, the set of crossing edges does not form a matching. This complicates the application of the same technique. Consequently, we need to develop an alternative approach to address this issue effectively.\\
\noindent The next step is to determine the path number. In our discussion, we have developed a step-by-step method to determine the path number of $L_{1}(m,2)$ for $m \ge 2 $. This method utilizes an auxiliary graph, which in this case is a complete graph. But for $k > 2$, this auxiliary graph is not a simple graph at all. Hence the similar method will not work here. Therefore we have to use a different method to find the path number.\\

\noindent As an extension of this work, we have suggested the following three problems.\\

\noindent \textbf{Problem 1.} Prove or disprove that the Levi graph $L_{t}(m,k)$ on $n$ vertices has a path decomposition of size at most $\lceil \frac{n}{2} \rceil$.\\
\noindent \textbf{Problem 2.} Determine the path number of $L_{1}(m,k)$, for $k > 2$.\\
\noindent \textbf{Problem 3.} Determine the path number of $L_{t}(m,k)$, for $t > 1$ and $3 \le k \le m$.\\

\end{document}